\newcommand{\newreptheorem}[2]{\newtheorem*{rep@#1}{\rep@title}
\newenvironment{rep#1}[1]{\def\rep@title{#2 \ref*{##1}}\begin{rep@#1}}{\end{rep@#1}}}
\theoremstyle{definition}
\newtheorem{thm}{Theorem}
\newtheorem{cor}{Corollary}
\newtheorem{prop}{Proposition}
\newtheorem{conj}{Conjecture}
\newtheorem{rmk}{Remark}
\newcommand{\qbin}[2]{\begin{bmatrix}{#1}\\ {#2}\end{bmatrix}}
\title{Large color $R$-matrix for knot complements and strange identities}
\author{Sunghyuk Park}
\address{Division of Physics, Mathematics and Astronomy,
California Institute of Technology,
1200 E. California Blvd.,
Pasadena,
CA 91125}
\email{spark3@caltech.edu}
\thanks{}
\begin{document}
\maketitle
\begin{abstract}
    The Gukov-Manolescu series, denoted by $F_K$, is a conjectural invariant of knot complements that, in a sense, analytically continues the colored Jones polynomials. In this paper we use the large color $R$-matrix to study $F_K$ for some simple links. Specifically, we give a definition of $F_K$ for positive braid knots, and compute $F_K$ for various knots and links. As a corollary, we present a class of `strange identities' for positive braid knots.  
\end{abstract}
\tableofcontents

\section{Introduction}
Recently, Gukov and Manolescu \cite{GM} conjectured the existence of a two-variable series $F_K(x,q)$ (let's call it the \emph{GM series}) for every knot $K$, which is in a sense an ``analytically continued'' version of the colored Jones polynomials: 
\begin{conj}[Conjecture 1.5 in \cite{GM}]\label{FKconj}
    There is a knot invariant $F_K(x,q)$, a series in two variables $x$ and $q$ with integer coefficients assigned to each knot $K$, such that
    \begin{equation}
        \frac{F_K(x,q=e^\hbar)}{x^{\frac{1}{2}}-x^{-\frac{1}{2}}} = \sum_{j\geq 0}\frac{P_K(j;x)}{\Delta_K(x)^{2j+1}}\frac{\hbar^j}{j!},
    \end{equation}
    where the r.h.s.\ is the \emph{large color expansion} (i.e.\ Melvin-Morton-Rozansky expansion \cite{MM, BG, R1}) of the reduced colored Jones polynomials $J_K(n;q=e^\hbar)$ expanded near $\hbar=0$ while keeping $x=q^n=e^{n\hbar}$ fixed,\footnote{In our convention, $n$ corresponds to the $n$-dimensional representation of $\mathfrak{sl}_2$. $J_K(n;q)$ are \emph{reduced} colored Jones polynomials which are $1$ for the unknot, as opposed to \emph{unreduced} colored Jones polynomials $\Tilde{J}_K(n;q)$ which are $[n]$ for the unknot. $P_K(j;x)\in \mathbb{Z}[x^{\pm 1}]$ are some polynomials determined in this expansion.} and $\Delta_K$ is the Alexander polynomial of $K$. 
\end{conj}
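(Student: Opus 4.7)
The plan is to construct $F_K(x,q)$ explicitly for positive braid knots via a state sum using the large color $R$-matrix acting on Verma modules of $U_q(\mathfrak{sl}_2)$, and then deduce the Melvin-Morton-Rozansky expansion property by comparison with the standard colored Jones computation. I would first set up the Verma module $M_x$ with highest weight encoded by the formal parameter $x=q^n$, without restricting $n$ to be an integer, and compute the matrix entries of the universal $R$-matrix on $M_x\otimes M_x$ in a weight basis $\{v_i\}_{i\geq 0}$; positivity of the braid will ensure that only $R$ (never $R^{-1}$) appears, so these entries are power series in $q$ with coefficients that are Laurent polynomials in $x$.

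Next, for a positive braid $\beta$ on $k$ strands whose closure is $K$, I would set
\begin{equation*}
F_K(x,q) \;:=\; (x^{1/2}-x^{-1/2})\cdot \mathrm{tr}^{\mathrm{red}}_q\,\rho(\beta),
\end{equation*}
where $\rho(\beta):M_x^{\otimes k}\to M_x^{\otimes k}$ is the composition of large color $R$-matrices assigned to the crossings of $\beta$, and the reduced quantum trace is closed up all but one strand and normalized to give $1$ on the unknot. Braid invariance is automatic from the Yang-Baxter equation, and invariance of the construction under the knot-level Markov moves restricted to the positive braid world is checked by cyclicity of the trace (conjugation) and a direct computation of the quantum trace of a single positive $R$-crossing over a Verma module strand (stabilization).

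To establish the MMR formula, I would specialize $x=q^n$ for positive integers $n$. Using that the irreducible module $V_n$ is the quotient of $M_{q^n}$ by a maximal submodule and that, inside the trace, this submodule contributes in a way that can be tracked via the extra $x^{1/2}-x^{-1/2}$ factor, I would identify the specialization with $(x^{1/2}-x^{-1/2})\,J_K(n;q)$. Since this matches for every $n\geq 1$, the large color expansion of the right hand side of the conjecture (the classical Melvin-Morton-Rozansky theorem) immediately transfers to $F_K(x,q)$, producing the required polynomials $P_K(j;x)$ and the Alexander denominator.

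The main obstacle will be analytic/combinatorial control of the infinite state sum over the Verma weight basis: I must show that, for each fixed power of $q$, only finitely many multi-indices contribute, so that $F_K(x,q)$ genuinely lies in $x^{c/2}\mathbb{Z}[x^{\pm 1}][[q]]$ for some half integer $c$ depending on the writhe. This will come down to estimating the minimum $q$-degree of products of $R$-matrix entries as a function of the weight indices, which is the technical heart of the argument. A secondary, more conceptual, worry is that two positive braid presentations of the same knot could in principle require passing through non-positive diagrams; if so, independence of the presentation would need to be extracted from the matching with $J_K(n;q)$ for all $n$, rather than from moves internal to the positive braid category.
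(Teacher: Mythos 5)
Your setup --- the reduced quantum trace of the large color (Verma module) $R$-matrix state sum for a positive braid, convergence via a joint $x$-degree/$q$-degree estimate exploiting positivity, and invariance under conjugation and positive stabilization --- is the same route the paper takes to establish the positive-braid case (Theorem \ref{pbthm}). However, two of your steps have genuine gaps. The main one is the assertion that, once the specialization $x=q^n$ is matched with $(q^{n/2}-q^{-n/2})J_K(n;q)$ for all $n\geq 1$, the Melvin--Morton--Rozansky expansion ``immediately transfers'' to $F_K$. It does not: the MMR statement concerns the $\hbar$-expansion at \emph{fixed} $x$, which is a resummation of the $(n,\hbar)$ dependence, and agreement on the lattice $x=q^n$ does not formally determine that fixed-$x$ expansion. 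Moreover, the specialization cannot even be performed naively on the completed series: the positivity estimate only forces the minimal $q$-degree of the coefficient of $x^{-m}$ to grow linearly in $m$ (with slope controlled by the number of crossings), whereas substituting $x^{-m}\mapsto q^{-nm}$ would need superlinear growth to converge; the specialization only makes sense at the level of the weight-stratified state sum, where the factors $\prod_{1\leq l\leq k}\qty(1-x^{-1}q^{j+l})$ truncate the Verma module to the $n$-dimensional quotient. The paper closes this step differently: the stratified Verma-module trace is precisely the object Rozansky analyzed in proving MMR, so its $\hbar$-expansion is the MMR expansion by \cite{R1}; you need that input (or an equivalent asymptotic analysis of the state sum), not the integer-color matching.

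The second issue is the one you flag yourself: two positive braid presentations of the same knot might a priori only be connected through non-positive diagrams. This is settled by a known result, cited as \cite{E} in the paper, that any two positive braid presentations of a given positive braid knot are related by conjugation and positive stabilizations; combined with positive-stabilization invariance of the absolutely convergent trace (the paper's Proposition \ref{prop:transverse}, via the transverse Markov theorem) this gives well-definedness. Your proposed fallback --- extracting independence of the presentation from matching with $J_K(n;q)$ for all $n$ --- runs into the same specialization problem described above, so as written it does not repair this gap either. Finally, note that the paper's degree bound uses two structural facts you should make explicit if you carry out the estimate: the open strand must be the leftmost one (right-closure), since the bounds come only from the right-hand strands of each crossing, and the closure being a knot (not a link) is what forces a large outer weight to propagate through some crossing and contribute to the bound.
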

\begin{conj}[Conjecture 1.6 in \cite{GM}]\label{quantumA}
    Moreover, if $\hat{A}_K(\hat{x},\hat{y},q)$ is the quantum $A$-polynomial \cite{Gar, Guk} annihilating the unreduced colored Jones polynomials, then 
    \begin{equation}
        \hat{A}_K(\hat{x},\hat{y},q) F_K(x,q) = 0.
    \end{equation}
\end{conj}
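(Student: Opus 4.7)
The plan is to deduce \Cref{quantumA} from \Cref{FKconj} by interpreting $\hat A_K$ as a $q$-difference operator acting on two-variable series: let $\hat x$ act as multiplication by $x$ and $\hat y$ by the shift $f(x,q)\mapsto f(qx,q)$, so that $\hat A_K=\sum_{i,j} c_{i,j}(q)\, \hat x^i\hat y^j$ becomes a well-defined operator on $\mathbb{Z}[x^{\pm 1/2}][[q]]$. Under the substitution $x=q^n$, these operations correspond exactly to multiplication by $q^{ni}$ and the shift $n\mapsto n+j$, so the desired identity $\hat A_K F_K=0$ should reduce, at $x=q^n$, to the defining recursion $\sum_{i,j} c_{i,j}(q)q^{ni}\tilde J_K(n+j;q)=0$, once one accounts for the $(q^{1/2}-q^{-1/2})$ factor relating $F_K(q^n,q)$ to $\tilde J_K(n;q)$ through the $(x^{1/2}-x^{-1/2})$ prefactor in \Cref{FKconj}.

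Next I would feed the Melvin-Morton-Rozansky expansion into this framework. By \Cref{FKconj},
\begin{equation*}
F_K(x,e^\hbar)=(x^{1/2}-x^{-1/2})\sum_{j\geq 0}\frac{P_K(j;x)}{\Delta_K(x)^{2j+1}}\frac{\hbar^j}{j!},
\end{equation*}
so $\hat A_K F_K(x,e^\hbar)$ is a formal $\hbar$-series whose $\hbar^k$-coefficient is a rational function of $x$ with integer coefficients. Because this coefficient vanishes at every $x=e^{n\hbar}$ (as a consequence of the recursion satisfied by $\tilde J_K$), and because a nonzero rational function in $x$ cannot vanish at infinitely many distinct specialisations, each $\hbar$-coefficient of $\hat A_K F_K$ must vanish identically. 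This already establishes the claim at the level of large-color asymptotics.

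The hard part will be promoting this asymptotic vanishing to a genuine identity in $\mathbb{Z}[x^{\pm 1/2}][[q]]$, since two distinct $q$-series can share the same expansion at $q=1$. A natural way forward is to combine the asymptotic argument with a direct $q$-series definition of $F_K$ (for instance, via the large color $R$-matrix construction developed later in this paper), check that $F_K(q^n,q)$ literally reproduces the unreduced colored Jones polynomial (up to the $(q^{1/2}-q^{-1/2})$ prefactor) for all integers $n\geq 1$, and then invoke a density argument in an appropriate completion of $\mathbb{Z}[x^{\pm 1/2}][[q]]$ to conclude that the $q$-series $\hat A_K F_K$, which vanishes on the family $\{x=q^n\}_{n\geq 1}$, must vanish identically. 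Making the completion and density statement precise is the delicate point, because the substitution $x=q^n$ is only formal for the full $q$-series and because $F_K$ is genuinely singular at $x=1$ in the $\hbar$-expansion; one should expect the argument to work most cleanly for classes of knots (such as the positive braid knots treated in this paper) where an explicit $R$-matrix formula for $F_K$ makes the verification concrete.
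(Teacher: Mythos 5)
This statement is not proved in the paper at all: it is Conjecture~\ref{quantumA}, quoted verbatim from Gukov--Manolescu, and the paper only \emph{tests} it experimentally (e.g.\ the numerical check that $F^+_{m(\mathbf{5}_2)}$ is annihilated by the explicit $\hat{A}_{m(\mathbf{5}_2)}$, and the use of recursion to generate higher coefficients). Even the paper's Theorem~\ref{pbthm} for positive braid knots only establishes the MMR statement of Conjecture~\ref{FKconj}, not the annihilation by $\hat{A}_K$. So there is no paper proof to compare against, and your text should be judged as an attempted proof of an open conjecture.

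As such, it has a genuine gap, and you have in fact located it yourself: everything up to ``this already establishes the claim at the level of large-color asymptotics'' is essentially the known heuristic motivation from \cite{GM} (order-by-order vanishing of the $\hbar$-expansion, using that a rational function of $x$ vanishing on the MMR specializations must vanish), but the promotion to an identity of honest $q$-series is precisely the content of the conjecture, and your proposed route does not close it. Concretely: (i) the substitution $x=q^n$ into $F_K$ is in general not well defined --- the coefficients $f_m^K(q)$ can be infinite $q$-series (non-fibered knots), and even for positive braid knots, where $F_K^-\in\mathbb{Z}[q][[x^{-1}]]$, setting $x=q^n$ produces an infinite sum whose convergence requires the minimal $q$-degree of $f_m^K$ to grow with $m$ (the paper only conjectures quadratic growth, in a remark that was ultimately cut), so the claim that ``$F_K(q^n,q)$ literally reproduces the unreduced colored Jones polynomial'' is unproved; (ii) there is no ``density argument in an appropriate completion'': as you note, distinct elements of $\mathbb{Z}[x^{\pm 1/2}][[q]]$ can have identical $\hbar$-expansions, and vanishing on the formal family $\{x=q^n\}$ does not force a $q$-series in two variables to vanish without exactly the kind of degree-growth control in (i). (A version of this ``infinite Vandermonde''-type argument is what the paper's deleted remark sketches, conditionally, for positive braids.) So the final paragraph is a plan contingent on unproven inputs rather than a proof; the statement remains a conjecture, and your write-up should present the $\hbar$-level computation as motivation, not as a theorem.
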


Their conjectures were motivated by yet another conjecture due to Gukov, Pei, Putrov and Vafa \cite{GPV, GPPV} that there exists an invariant of $3$-manifolds $\hat{Z}$ (let's call it the \emph{GPPV series}) valued in $q$-series with integer coefficients. Conjecturally, the GPPV series are categorifiable (i.e.\ they are graded Euler characteristics of some homology theory), and their categorification should be thought of as $3$-manifold analog of the Khovanov homology \cite{Kh}.\footnote{This fascinating conjecture is one of our main motivations. However, almost nothing is known about the categorification of these $q$-series invariants, and in this article we focus on $q$-series invariants, not their categorification.}
The GM series should be thought of as the GPPV series for knot complements. In fact, Gukov and Manolescu conjectured that there is a surgery formula relating the GM series of a knot complement and the GPPV series for the 3-manifold obtained by a Dehn surgery on the knot:
\begin{conj}[Conjecture 1.7 in \cite{GM}]\label{conj:surgery}
    Let $S^3_{\frac{p}{r}}(K)$ be the 3-manifold obtained by $\frac{p}{r}$-surgery on $K\subset S^3$, and let $\mathcal{L}^{b}_{\frac{p}{r}}$ be the `Laplace transform' defined by
    \begin{equation}
        \mathcal{L}^{b}_{\frac{p}{r}}: x^u \mapsto \begin{cases}q^{-\frac{r}{p}u^2}&\text{ if }u\in \frac{b}{r}+\frac{p}{r}\mathbb{Z}\\ 0&\text{ otherwise}\end{cases}.
    \end{equation}
    Then
    \begin{equation}\label{knotsurgery}
        \hat{Z}_b(S^3_{\frac{p}{r}}(K)) = \epsilon q^d \mathcal{L}^{b}_{\frac{p}{r}}\qty[(x^{\frac{1}{2r}}-x^{-\frac{1}{2r}})F_K(x,q)]
    \end{equation}
    for some sign $\epsilon\in \{\pm 1\}$ and $d\in \mathbb{Q}$, whenever the r.h.s.\ makes sense. 
\end{conj}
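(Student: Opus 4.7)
The plan is to interpret the Laplace transform $\mathcal{L}^b_{p/r}$ as the ``analytic continuation'' of the classical Reshetikhin--Turaev surgery formula. Recall that in the root-of-unity setting, the WRT invariant of $S^3_{p/r}(K)$ can be written as a finite Gaussian-weighted sum of unreduced colored Jones polynomials $\tilde J_K(n;q) = [n]\,J_K(n;q)$, with weight $q^{-\frac{r}{p}n^2}$ and with a selection over residue classes of $n$ modulo $p$ labelling the Spin$^c$ structures. The definition of $\mathcal{L}^b_{p/r}$ is precisely the $x$-variable incarnation of this sum, with $x=q^n$ identifying the holonomy variable with the quantum color and the constraint $u \in b/r + (p/r)\mathbb{Z}$ picking out the $b$-th Spin$^c$ residue class.

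Next, using \cref{FKconj}, I would replace $\tilde J_K(n;q)$ by the analytically continued invariant $F_K(x,q)/(x^{1/2}-x^{-1/2})$. The factor $(x^{1/2r}-x^{-1/2r})$ in \eqref{knotsurgery} arises because one must rescale $n \mapsto n/r$ in the large-color variable to accommodate rational surgery: multiplying by this quantum integer restores the unreduced normalization appropriate for the gluing. Thus the right-hand side of \eqref{knotsurgery} should be read as the Laplace-type re-expansion of the heuristic surgery sum, now defined as a formal $q$-series rather than a finite sum at a root of unity.

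To establish that this actually agrees with $\hat Z_b$ of the surgered manifold, I would specialize to situations where $S^3_{p/r}(K)$ already admits a plumbing presentation and hence $\hat Z_b$ is defined by the GPPV contour-integral formula. The basic examples are torus knots with their Seifert-fibered surgeries: there the plumbing $\hat Z_b$ is explicit, $F_K$ for torus knots is accessible through the positive-braid machinery developed in this paper, and both sides can be expanded as $q$-series and matched coefficient by coefficient. One would then try to promote this coefficient matching to a more conceptual cut-and-paste argument, thinking of $F_K$ as the $\hat Z$ of the knot complement with boundary holonomy $x$ and the Laplace transform as the Dehn filling operation that integrates $x$ out.

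The principal obstacle is that $\hat Z$ is not yet defined for arbitrary closed 3-manifolds, so the phrase ``whenever the r.h.s.\ makes sense'' is load-bearing: one must simultaneously prove convergence of the Laplace transform, define the target $q$-series $\hat Z_b$, and verify that the resulting series is an invariant of the surgery presentation. Even granting these, \cref{FKconj} is itself a conjecture and $F_K$ does not yet have a universal closed-form construction, so any proof must proceed inside a class of knots for which $F_K$ is independently available---hence the relevance of the positive-braid definition developed in the main body of this paper.
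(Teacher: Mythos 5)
The statement you are trying to prove is a conjecture --- Conjecture 1.7 of Gukov--Manolescu, restated here as \cref{conj:surgery} --- and the paper contains no proof of it, nor does it claim one. What the paper does instead is gather evidence: in Section \ref{sec:surgeries} it checks that the surgery formula, applied to $F_K$ computed via the large color $R$-matrix, reproduces the plumbing-formula $\hat{Z}$ for the exceptional ($-1,-2,-3$) surgeries on $m(\mathbf{5}_2)$, and that different surgery presentations of the same manifold (e.g.\ $S^3_{-1/r}(K_{m,n})=S^3_{-1/n}(K_{m,r})$ coming from the symmetry of the Borromean rings) give the same $q$-series; it then turns the formula around to reverse-engineer $F_L$ for links such as the Whitehead link and Borromean rings. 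Your proposal is honest about the same obstructions the paper faces ($\hat{Z}$ undefined for general closed $3$-manifolds, \cref{FKconj} itself conjectural, the clause ``whenever the r.h.s.\ makes sense'' doing real work), and the strategy you outline --- read $\mathcal{L}^b_{p/r}$ as the analytic continuation of the Gaussian-weighted WRT rational-surgery sum with $x=q^n$, then verify against plumbed examples --- is exactly the heuristic motivation in \cite{GM} together with the kind of coefficient-by-coefficient checks this paper performs. So there is no proof to measure you against; judged as a proof, your text (like the paper) establishes nothing beyond consistency in special cases, and you correctly flag this.

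Two smaller points of comparison. First, your proposed test cases (torus knots and their Seifert-fibered surgeries) were already treated in \cite{GM}; the added value of this paper's checks is precisely that they go beyond that class, using the non-fibered hyperbolic knot $m(\mathbf{5}_2)$ and double twist knots, where $F_K$ has infinite $q$-series coefficients and the agreement is much less automatic. Second, your hope of ``promoting the coefficient matching to a conceptual cut-and-paste argument'' with $F_K$ as the $\hat{Z}$ of the complement has no counterpart in the paper and remains open; indeed the paper argues the opposite reduction --- that a general definition of $F_K$ would follow from a surgery formula valid for all coefficients --- so treating \cref{conj:surgery} as derivable from the $R$-matrix construction would be circular in the paper's own logic.
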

Unfortunately the r.h.s.\ of \eqref{knotsurgery} doesn't always make sense; in practice, it only makes sense when $-\frac{r}{p}$ is big enough to make the r.h.s. a Laurent power series in $q$. 
Therefore, the problem of mathematically defining the GPPV series could be viewed as a two-step problem: 
\begin{enumerate}
    \item Give a mathematical definition of $F_K$ for links. 
    \item Make a surgery formula that works for all surgery coefficients. 
\end{enumerate}
In this paper we make some progress toward the first step, and argue that the first step can actually be reduced to the second step.

\addtocontents{toc}{\protect\setcounter{tocdepth}{1}}
\subsection*{Statement of the results}
The purpose of this paper is to introduce the large color $R$-matrix as a useful tool to compute $F_K$ for a much bigger class of knots than it was previously known. (Previously, $F_K$ was only computed for torus knots and, experimentally, for the figure-eight knot \cite{GM}.)

Our main theorem is the following. 
\begin{thm}\label{pbthm}
    Conjecture \ref{FKconj} is true for positive braid knots, and the corresponding $F_K$ can be computed using the large color $R$-matrix. 
\end{thm}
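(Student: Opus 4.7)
The strategy is to construct $F_K(x,q)$ for a positive braid knot $K$ directly from a braid presentation, using an analytically continued $R$-matrix of $U_q(\mathfrak{sl}_2)$, and then verify the Melvin-Morton-Rozansky expansion required by Conjecture~\ref{FKconj}. The first step is to write out the standard $R$-matrix of $U_q(\mathfrak{sl}_2)$ on a tensor product $V_a\otimes V_b$ of highest-weight modules in the weight basis, and substitute $x_i = q^{n_i}$; treating the $x_i$ as formal variables, the matrix entries become elements of (a localization of) $\mathbb{Z}[x_1^{\pm 1/2}, x_2^{\pm 1/2}][[q]]$, indexed by non-negative integers recording weight displacements. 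Together with the analogous cap/cup morphisms this yields the large color $R$-matrix $\mathcal{R}(x_1,x_2;q)$ as an operator on an infinite-dimensional formal module.

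Given a positive braid word $\beta$ on $k$ strands whose closure is $K$, I would then define $F_K(x,q)$ as the appropriately normalized closure of $\mathcal{R}(\beta)$, with all strands colored by the same formal variable $x$. A priori this is an infinite sum over intermediate weight labels along the braid; specializing $x = q^n$ for a positive integer $n$ collapses it to the unreduced colored Jones polynomial $\tilde{J}_K(n;q)$ up to the standard prefactor, by construction.

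The central technical point is to show that the infinite sum lies in $(x^{1/2}-x^{-1/2})\cdot\mathbb{Z}[x^{\pm 1}][[q]]$. This is where positivity is essential: the matrix entries of $\mathcal{R}$ in the weight basis carry a $q$-valuation that grows linearly with the weight indices, and this growth is preserved and additive under composition along positive crossings. Consequently only finitely many intermediate labelings contribute to each fixed power of $q$, and I would complete this step by making the valuation bound explicit and then summing over labels.

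The MMR expansion of Conjecture~\ref{FKconj} then follows by combining the previous step with the classical results of Melvin-Morton, Bar-Natan-Garoufalidis, and Rozansky on the large color expansion of the colored Jones polynomial at fixed $x=q^n$: the $\hbar$-expansion of $F_K(x,q=e^\hbar)$ is determined in each degree by $F_K(q^n,q)$ for positive integers $n$, with coefficients rational in $x$ with controlled denominators, so matching MMR at every $x=q^n$ forces the identity at the formal level. Knot invariance of $F_K$ falls out as a byproduct, since the MMR expansion itself is a knot invariant. The hardest step is the $q$-adic convergence: positivity of the braid is precisely what turns the heuristic analytic continuation into a genuine element of a formal power series ring, and extracting sharp enough valuation bounds to control the infinite sum is the analytic heart of the theorem.
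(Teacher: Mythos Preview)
Your approach is essentially the paper's: construct $F_K$ as the reduced quantum trace of the large color $R$-matrix on highest-weight Verma modules, show absolute convergence for positive braids by bounding degrees in terms of the weight labels, and then invoke Rozansky's work to identify the $\hbar$-expansion with the MMR expansion.

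Two points where the paper is sharper or takes a slightly different route. First, the target ring: the paper shows the trace lands in $\mathbb{Z}[q][[x^{-1}]]$ (each coefficient of $x^{-m}$ is a genuine \emph{polynomial} in $q$), not merely $\mathbb{Z}[x^{\pm 1}][[q]]$; both an $x$-degree and a $q$-degree bound are extracted, and the key observation is that the bounds at each positive crossing involve only the weights on the two \emph{right} strands, which is why one must use the right-closure leaving the leftmost strand open. Your valuation sketch is in the right spirit but misses this asymmetry. Second, for knot invariance the paper does not argue via ``the MMR expansion is already an invariant''; instead it cites the fact that any two positive braid presentations of a positive braid knot are related by positive Markov stabilizations, under which the trace is manifestly invariant. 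Your route also works once one knows each coefficient is a polynomial in $q$ (so that vanishing $\hbar$-expansion forces vanishing), and indeed the paper uses exactly that argument in a remark to show the resulting map $V_\infty^h\to V_\infty^h$ is central---but for the main invariance statement the stabilization argument is more direct.
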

A nice corollary of Theorem \ref{pbthm} is a new class of `strange identities' for positive braid knots, which has the Kontsevich-Zagier strange identity \cite{Z} as a special case for the right-handed trefoil knot. 
\begin{cor}\label{strangeidentity}
    For every positive braid knot $K$, there is a strange identity
    \begin{equation}
        \langle K \rangle \text{``}=\text{''} \frac{F_K^{\pm}(x,q)}{x^{\frac{1}{2}}-x^{-\frac{1}{2}}}\bigg\vert_{x=1},
    \end{equation}
    where the l.h.s.\ is the Kashaev's invariant defined by $\langle K \rangle_n = J_K(n;q=e^{\frac{2\pi i}{n}})$ at each root of unity $q = e^{\frac{2\pi i}{n}}$ and the r.h.s.\ is a $q$-series defined inside the unit disk. The quoted equality means that if we set $q=e^\hbar$, their perturbative expansions in $\hbar$ agree. 
\end{cor}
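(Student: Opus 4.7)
The plan is to deduce the claimed strange identity directly from the MMR expansion provided by Theorem~\ref{pbthm}, comparing both sides as formal power series in $\hbar$ (with $q=e^\hbar$). Let $K$ be a positive braid knot. By Theorem~\ref{pbthm} the identity
\[
\frac{F_K^{\pm}(x,e^\hbar)}{x^{1/2}-x^{-1/2}}
\;=\;\sum_{j\geq 0}\frac{P_K(j;x)}{\Delta_K(x)^{2j+1}}\,\frac{\hbar^j}{j!}
\]
holds as a formal power series in $\hbar$. Because $\Delta_K(1)=\pm 1$ for any knot, the right-hand side specialises unambiguously at $x=1$ to $\pm\sum_{j\geq 0} P_K(j;1)\,\hbar^j/j!$, a well-defined element of $\mathbb{Z}[[\hbar]]$.

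By the very definition recorded in Conjecture~\ref{FKconj}, that same right-hand side \emph{is} the Melvin--Morton--Rozansky expansion of the reduced colored Jones polynomial $J_K(n;e^\hbar)$ under the substitution $x=q^n$. Kashaev's invariant evaluates $J_K(n;q)$ at $q=e^{2\pi i/n}$, i.e.\ at $\hbar=2\pi i/n$ and $x=q^n=1$; hence the perturbative expansion of $\langle K\rangle$ in $\hbar$, in the sense prescribed by the statement of the corollary, is obtained by setting $x=1$ in the MMR series and therefore also equals $\pm\sum_{j\geq 0} P_K(j;1)\,\hbar^j/j!$. The two perturbative expansions thus coincide term by term, which is the content of the strange identity.

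The main obstacle, in my view, is not this formal identity of perturbative expansions, which is essentially automatic once Theorem~\ref{pbthm} is in hand, but rather the assertion implicit in the statement that $F_K^{\pm}(x,q)/(x^{1/2}-x^{-1/2})\big|_{x=1}$ is a bona fide $q$-series defined inside the unit disk. I would handle this by unpacking the large color $R$-matrix formula for $F_K^{\pm}$ from Theorem~\ref{pbthm}: for a positive braid presentation of $K$ the sums contributing to each power of $q$ should terminate, and specialising $x=1$ then produces a well-defined $q$-series whose radius of convergence is at least $1$. Combined with the perturbative identity above, this upgrades the formal equality into a genuine Kontsevich--Zagier-type strange identity, which reduces to the original trefoil strange identity of \cite{Z} when $K=3_1$.
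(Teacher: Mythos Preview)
Your argument is essentially the paper's own argument in Section~\ref{sec:strangeid}: both deduce the identity from Theorem~\ref{pbthm} by comparing MMR expansions and then specializing at $x=1$. The one place where the paper is more explicit is in what gives the left-hand side a well-defined $\hbar$-expansion in the first place. You write that ``the perturbative expansion of $\langle K\rangle$ in $\hbar$ \ldots\ is obtained by setting $x=1$ in the MMR series,'' but $\langle K\rangle$ is a priori only a function on roots of unity, so this assertion is precisely the content of the strange identity rather than a prior fact. The paper supplies the missing link by identifying $\langle K\rangle$ with the quantum trace $\Tr_q'\bm{\beta}_{V_\infty^h}\big|_{x=1}$: this trace is a concrete function of $q$ which (i) equals the Kashaev invariant at each root of unity (Section~4.1), and (ii) literally equals $F_K^-/(x^{1/2}-x^{-1/2})$ before specializing $x$, hence shares its $\hbar$-expansion. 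Weyl symmetry is then invoked to pass from $F_K^-$ to the balanced $F_K^{\pm}$ at the level of $\hbar$-series; you implicitly use this when you start directly from $F_K^{\pm}$.

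Your final paragraph on why the right-hand side is an honest $q$-series inside the unit disk is on target and matches what the paper asserts without detailed proof. The relevant finiteness (only finitely many $f_m^K(q)$ contribute to any fixed power of $q$) follows from the bound in the proof of Theorem~\ref{pbthm}, which shows that states with maximum weight $M$ have $q$-degree at least $M/2$, so the coefficient of each $q^k$ in the trace is a polynomial in $x^{-1}$.
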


Although our main theorem is on positive braid knots only, it turns out that the method of large color $R$-matrix is more robust. Indeed, using the large color $R$-matrix, we experimentally compute $F_K$ for many knots and links beyond positive braids, including positive double twist knots, fibered strongly quasi-positive knots up to 10 crossings, and the Whitehead link. 

Based on our examples of $F_K$ for some links, we extend the Conjectures \ref{FKconj} and \ref{conj:surgery} to links. These conjectures will be illustrated through various non-trivial examples. 
\begin{conj}
    There is a link invariant $F_L(x_1,\cdots,x_l,q)$, a series in $x_1,\cdots,x_l$ and $q$ with integer coefficients, where $l$ is the number of components of the link $L$, such that
    \begin{equation}
        F_L(x_1,\cdots,x_l,q=e^\hbar) = \sum_{j\geq 0}\frac{P_L(j;x_1,\cdots,x_l)}{\nabla_L(x_1,\cdots,x_l)^{2j+1}}\frac{\hbar^j}{j!},
    \end{equation}
    where the r.h.s.\ is the large color expansion \cite{R2} of the colored Jones polynomials $J_L(n_1,\cdots,n_l;q=e^\hbar)$ expanded near $\hbar = 0$ while keeping $x_i = q^{n_i} = e^{n_i \hbar}$ fixed, for each $1\leq i\leq l$, and $\nabla_L$ is the Alexander-Conway function of $L$. 
    
    Moreover, $F_L$ is annihilated by the quantum $A$-ideal annihilating the colored Jones polynomials of $L$. That is, it is annihilated by any $q$-difference equation that annihilates the colored Jones polynomials of $L$. 
\end{conj}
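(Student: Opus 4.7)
The plan is to adapt the large color $R$-matrix construction behind Theorem \ref{pbthm} from knots to links. Starting with a positive braid presentation of $L$, I would assign an independent color variable $x_i$ to each component and, at each crossing between strands of components $i$ and $j$, insert the two-variable large color $R$-matrix $R(x_i,x_j;q)$. Composing these along the braid and closing up with a suitable multi-color trace produces a candidate $F_L(x_1,\ldots,x_l,q)$. The positivity of the braid ensures that all summation indices run toward increasing powers of $q$, so the sum converges term-by-term to an element of $\mathbb{Z}[x_1^{\pm 1},\ldots,x_l^{\pm 1}][[q]]$ up to an overall framing factor.

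Next I would establish invariance under braid conjugation and Markov stabilization using the Yang-Baxter equation and the Reidemeister II relation for the large color $R$-matrix, both of which hold term-by-term in the $q$-expansion exactly as in the knot case. To match the output with the multivariable Melvin-Morton-Rozansky expansion, I would invoke Rozansky's link MMR theorem \cite{R2}: the semiclassical limit of the large color $R$-matrix is built from the reduced Burau representation of the braid group, whose determinant computes $\nabla_L$, so the expected denominators $\nabla_L^{2j+1}$ emerge automatically. The annihilation by the quantum $A$-ideal would then follow formally, since $x_i^{\pm 1}$ and $q$ encode shifts of the color $n_i$ and of the $\hbar$-grading in the $R$-matrix construction, and any $q$-difference relation satisfied by $J_L(n_1,\ldots,n_l;q)$ transports to $F_L$.

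The principal obstacle is twofold. First, beyond positive braid links the convergence argument breaks down, so extending to arbitrary links --- already visible in the author's experimental computations for fibered strongly quasi-positive links and the Whitehead link --- will require either a Seifert-surface-based construction, a stabilization scheme, or a genuine analytic continuation of the $R$-matrix. Second, and I expect more seriously, the normalization by $\nabla_L$ is much subtler for links than for knots: $\nabla_L$ may vanish on coordinate subspaces in $(x_1,\ldots,x_l)$, and one must identify the correct expansion chamber in which $F_L$ is an honest power series in the $x_i$ and $q$ with integer coefficients. The bulk of the new link-theoretic phenomena, and the hardest step of any full proof, should live in this second issue.
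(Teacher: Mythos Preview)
The statement you are addressing is a \emph{conjecture} in the paper, not a theorem: the paper offers no proof of it whatsoever. What the paper does is state the conjecture, motivate it by analogy with the knot case (Conjectures~\ref{FKconj} and~\ref{quantumA}), and then support it with experimental evidence---computations for torus links, the Whitehead link, and partial data for the Borromean rings, obtained either directly from the large color $R$-matrix or by reverse-engineering via the partial surgery formula. There is therefore no ``paper's own proof'' to compare your proposal against.

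Your proposal is best read not as a proof but as a sketch of a program, and you are candid about this: you explicitly flag that convergence fails beyond positive braids and that the $\nabla_L$ normalization is subtle. Those are exactly the reasons the statement remains conjectural. A few further issues you should be aware of, even within the positive-braid-link regime you single out: (i) the paper's invariance argument for knots uses only \emph{positive} Markov stabilization (Proposition~\ref{prop:transverse}), not full Markov equivalence, and relies on Etnyre's theorem that positive braid presentations of a given positive braid knot are connected by positive stabilizations---you would need an analogous transverse-classification statement for positive braid links; (ii) for a link there is no single ``reduced'' trace---cutting one strand of one component leaves the other components closed, and the resulting object is not obviously the right normalization to match $1/\nabla_L$ semiclassically; (iii) your claim that annihilation by the quantum $A$-ideal ``follows formally'' is too quick: even in the knot case the paper does not prove Conjecture~\ref{quantumA} for positive braid knots, it only verifies it numerically in examples.

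In short, your outline is a reasonable heuristic for why the conjecture should hold and is in the same spirit as the paper's evidence, but it is not a proof, and the paper does not claim one either.
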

\begin{conj}\label{conj:linksurg}
    Let $S^3_{p_1,\cdots,p_l}(L)$ be the 3-manifold obtained by $(p_1,\cdots,p_l)$ surgery on $L\subset S^3$, and let $\mathrm{B}$ be the $l\times l$ linking matrix defined by
    \begin{equation}
        \mathrm{B}_{ij} = \begin{cases}p_i &\text{ if }i = j\\ lk(i,j) &\text{ otherwise}\end{cases}.
    \end{equation}
    Let $\mathcal{L}^{b}_{\mathrm{B}}$ be the `Laplace transform' defined by
    \begin{equation}
        \mathcal{L}^{b}_{\mathrm{B}}: x^u \mapsto \begin{cases}q^{-(u,\mathrm{B}^{-1}u)}&\text{ if }u\in b+\mathrm{B}\mathbb{Z}^l\\ 0&\text{ otherwise}\end{cases}.
    \end{equation}
    Then
    \begin{equation}
        \hat{Z}_b(S^3_{p_1,\cdots,p_l}(L)) = \epsilon q^d \mathcal{L}_{\mathrm{B}}^b\qty[(x_1^{\frac{1}{2}}-x_1^{-\frac{1}{2}})\cdots (x_l^{\frac{1}{2}}-x_l^{-\frac{1}{2}})F_L(x_1,\cdots,x_l,q)]
    \end{equation}
    for some sign $\epsilon \in \{\pm 1\}$ and $d\in \mathbb{Q}$, whenever the r.h.s.\ makes sense. 
\end{conj}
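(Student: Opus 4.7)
My plan is to reduce Conjecture \ref{conj:linksurg} to a combination of (i) the Gukov-Pei-Putrov-Vafa definition of $\hat{Z}_b$ for plumbed 3-manifolds, (ii) the characterization of $F_L$ via the large-color expansion from the preceding conjecture, and (iii) a torus-gluing principle for $\hat{Z}$. First I would verify the formula in the baseline case where $L = U^{\sqcup l}$ is an unlink. Here each $F_{U_i}(x_i,q)$ should reduce, up to an overall sign and power of $q$, to $(x_i^{1/2}-x_i^{-1/2})^{-1}$, while $\mathrm{B}$ is diagonal with entries $p_i$, so that $S^3_{p_1,\ldots,p_l}(U^{\sqcup l})$ is a connect sum of lens spaces. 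One then recognizes $\mathcal{L}^b_{\mathrm{B}}$ as the Gaussian projection to the coset $b+\mathrm{B}\mathbb{Z}^l$ weighted by $q^{-(u,\mathrm{B}^{-1}u)}$, and matching against the known expressions for $\hat{Z}_b$ of lens spaces becomes an elementary Gauss-sum computation. The same logic should cover any link for which surgery already yields a plumbed manifold.

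To bootstrap to general $L$, I would use a torus-gluing formula: decomposing $S^3_{p_1,\ldots,p_l}(L)$ along the peripheral tori of a tubular neighborhood of $L$ into the exterior $S^3 \setminus \nu(L)$ and a disjoint union of solid tori (with gluing data encoded by $\mathrm{B}$), one expects $\hat{Z}_b$ to factor as a pairing between $F_L$ and the $F$-series of the solid-torus fillings. The filling pieces are essentially delta-supported on the peripheral variables, so after the pairing one is left with a summation that must match $\mathcal{L}^b_{\mathrm{B}}$; the quadratic form $(u,\mathrm{B}^{-1}u)$ then encodes the framings $p_i$ and mutual linking numbers through the standard framing-correction formula for quantum invariants on torus boundaries. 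In spirit this is the multi-component generalization of how $F_K$ already enters Conjecture \ref{conj:surgery} in the knot case, and the knot-case proof strategy should transplant once the pairing is set up.

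The main obstacle is twofold. Analytically, the Laplace transform only produces a convergent $q$-series when $\mathrm{B}$ is sufficiently negative definite, the multi-component analogue of the $-r/p \gg 0$ restriction in the knot case, and integrality of the resulting coefficients is a further nontrivial requirement. Topologically, the real difficulty is rigorously establishing the torus-gluing formula for $\hat{Z}$ and $F_L$ used in the second step: this is essentially unknown outside of very restricted settings, and proving it would likely require a state-sum or large-color $R$-matrix construction parallel to Theorem \ref{pbthm} but adapted to link exteriors and their boundary tori. Pinning down the sign $\epsilon$ and the shift $d$ is secondary but also nontrivial, as it requires tracking signature and $\eta$-invariant contributions from $\mathrm{B}$ across the gluing.
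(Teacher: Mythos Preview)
The fundamental issue is that the statement you are trying to prove is a \emph{conjecture}, not a theorem: the paper does not prove it and does not claim to. What the paper actually does with Conjecture~\ref{conj:linksurg} is (a) state it as the natural link-level extension of Conjecture~\ref{conj:surgery}, (b) perform consistency checks in Section~\ref{sec:surgeries} by computing both sides independently in cases where the surgered manifold happens to be plumbed (e.g.\ the exceptional surgeries on $m(\mathbf{5}_2)$, and $-\tfrac{1}{r}$-surgeries on double twist knots via the Borromean rings), and (c) use the conjecture in reverse to extract $F_L$ for links like the Whitehead link from knowledge of $F_K$ for their partial surgeries. None of this constitutes or is presented as a proof.

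As for your proposed strategy itself, the core step---a torus-gluing formula for $\hat{Z}$ pairing $F_L$ against solid-torus fillings---is not an available input but is essentially the content of the conjecture. You acknowledge this yourself (``this is essentially unknown outside of very restricted settings''), which means the argument is circular: the ``proof'' assumes a gluing principle that, once stated precisely, \emph{is} Conjecture~\ref{conj:linksurg}. The baseline unlink case and the plumbed cases you mention are exactly the consistency checks the paper performs, and they provide evidence, not a reduction. Moreover, even the single-component version (Conjecture~\ref{conj:surgery}) is open in the paper, so there is no ``knot-case proof strategy'' to transplant. A genuine proof would require an independent construction of $\hat{Z}$ for manifolds with torus boundary together with a gluing theorem, neither of which exists at present.
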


One takeaway from this paper is that we provide an entire playground of new examples to play with and test conjectures. (For example, previously the only hyperbolic knot for which $F_K$ was computed was the figure-eight knot in \cite{GM}, but now we have infinitely more!)
We believe our result is an important step toward the eventual mathematical definition of these conjectural $3$-manifold invariants, which we hope to see soon.

\subsection*{Organization of the paper}
After giving a brief review of $U_q(\mathfrak{sl}_2)$ and the $R$ matrix on finite-dimensional representations in Section \ref{sec:review}, we describe the large color $R$ matrix on Verma modules in Section \ref{sec:largecolorR}. 

Then, in Section \ref{sec:main}, we prove Theorem \ref{pbthm}, and use the large color $R$-matrix to compute $F_K$ in various examples. 

In Section \ref{sec:surgeries}, we perform various non-trivial consistency checks of the surgery formula (Conjectures \ref{conj:surgery} and \ref{conj:linksurg}), and demonstrate how to reverse-engineer $F_K$ for various links. 

Finally, in Section \ref{sec:strangeid}, we explain how our result can be viewed as a generalization of Kontsevich-Zagier's strange identity.

\subsection*{Acknowledgments}
I would like to thank Sergei Gukov and Ciprian Manolescu for insightful discussions, as well as Piotr Kucharski, Robert Osburn, and Nikita Sopenko for useful conversations. 
The author was supported by Kwanjeong Educational Foundation.

\addtocontents{toc}{\protect\setcounter{tocdepth}{2}}
\section{Review of $U_q(\mathfrak{sl}_2)$ and $R$-matrix}\label{sec:review}
Here we give a brief review the quantum enveloping algebra of $\mathfrak{sl}_2$ and the $R$ matrix on its finite dimensional representations. These are standard materials, and for more on this topic, see e.g.\ \cite{Majid, CP}. 
\subsection{$U_q(\mathfrak{sl}_2)$}
\subsubsection{Classical $\mathfrak{sl}_2$}
We use the standard notations: 
\[e = \begin{pmatrix}0 & 1 \\ 0 & 0\end{pmatrix},\quad f = \begin{pmatrix}0 & 0 \\ 1 & 0\end{pmatrix},\quad h = \begin{pmatrix}1 & 0 \\ 0 & -1\end{pmatrix}\]
\[[h,e]=2e,\quad [h,f]=-2f,\quad [e,f]=h.\]
Let's denote by $V_n$ the irreducible $n$-dimensional representation of $\mathfrak{sl}_2$. Then 
\[V_n = \bigoplus_{j\in \mathbb{Z}}V_n(j)\]
where $V_n(j)$ is the $j$-eigenspace of $V_n$ under the action by $h$. The action of $e$ and $f$ can be drawn diagrammatically as
\[V_n(1-n)\overset{e}{\underset{f}{\rightleftharpoons}}\cdots\overset{e}{\underset{f}{\rightleftharpoons}}V_n(j)\overset{e}{\underset{f}{\rightleftharpoons}} V_n(j+2)\overset{e}{\underset{f}{\rightleftharpoons}}\cdots\overset{e}{\underset{f}{\rightleftharpoons}}V_n(n-1),\]
where \[[e,f]v = jv\] for $v\in V_n(j)$. 

Every $V_n$ appears as a factor of tensor powers of $V_2$: 
\[V_n = \Delta_{n-1}(V_2),\]
where $\Delta_n(x)$ is the $n$-th Chebyshev polynomial defined inductively by
\[\Delta_0 = 1,\quad \Delta_1(x) = x,\quad \Delta_{n+1}(x) = x\Delta_n(x) - \Delta_{n-1}(x).\]
\subsubsection{Quantum $\mathfrak{sl}_2$}
$U_q(\mathfrak{sl}_2)$ is the associative algebra over $\Bbbk := \mathbb{C}(q^{\frac{1}{2}})$ with generators $e,f,q^{\frac{h}{2}}$ and relations
\begin{align*}
    q^{i\frac{h}{2}}q^{j\frac{h}{2}} &= q^{(i+j)\frac{h}{2}},\quad q^{0\frac{h}{2}}=1,\\
    q^{\frac{h}{2}} e q^{-\frac{h}{2}} &= q\, e,\quad q^{\frac{h}{2}} f q^{-\frac{h}{2}} = q^{-1} f,\\
    [e,f] &= \frac{q^{\frac{h}{2}}-q^{-\frac{h}{2}}}{q^{\frac{1}{2}}-q^{-\frac{1}{2}}}.
\end{align*}
Observe that formally in the classical limit $q\rightarrow 1$, these relations reduces to the classical relations. 

Let's denote by $V_n$ the irreducible $n$-dimensional representation of $U_q(\mathfrak{sl}_2)$. Then
\[V_n = \bigoplus_{j\in \mathbb{Z}}V_n(j),\]
where $V_n(j)$ is the $q^{\frac{j}{2}}$-eigenspace of $V_n$ under the action by $q^{\frac{h}{2}}$. 
Diagrammatically we have the same picture as before
\[V_n(1-n)\overset{e}{\underset{f}{\rightleftharpoons}}\cdots\overset{e}{\underset{f}{\rightleftharpoons}}V_n(j)\overset{e}{\underset{f}{\rightleftharpoons}} V_n(j+2)\overset{e}{\underset{f}{\rightleftharpoons}}\cdots\overset{e}{\underset{f}{\rightleftharpoons}}V_n(n-1)\]
except that now 
\[[e,f]v = [j]\,v\]
for $v\in V_n(j)$ and $[j] := \frac{q^{\frac{j}{2}}-q^{-\frac{j}{2}}}{q^{\frac{1}{2}}-q^{-\frac{1}{2}}}$. 

The quantum dimension of $V_n$ is
\[\dim_q V_n = \Tr q^{\frac{h}{2}} = q^{\frac{n-1}{2}} + q^{\frac{n-3}{2}} + \cdots + q^{\frac{1-n}{2}} = [n] = \Delta_{n-1}([2]).\]

\subsection{$R$-matrix for $V_n\otimes V_m$}\label{subsec:finRmatrix}
Let $V_n$ be the $n$-dimensional $U_q(\mathfrak{sl}_2)$-module as before. We choose the basis vectors $\{v_j\}$, $j = 1-n, 3-n,\cdots,n-1$ of $V_n$ such that 
\begin{align*}
    ev_j &= [\frac{n-1-j}{2}]\,v_{j+2},\\
    fv_j &= [\frac{n-1+j}{2}]\,v_{j-2},\\
    q^{\frac{h}{2}}v_j &= q^{\frac{j}{2}}v_j.
\end{align*}
It is easy to check that this is indeed well-defined: 
\[[e,f]v_j = \qty([\frac{n-1-(j-2)}{2}][\frac{n-1+j}{2}]-[\frac{n-1+(j+2)}{2}][\frac{n-1-j}{2}])v_j = [j]\, v_j,\]
which follows from the identity
\[[a][b]-[a+j][b-j] = [j][a-b+j].\]

In this basis, the $R$-matrix on $V_n\otimes V_n$ has the following form (see \cite{KM, R1}):\footnote{Here $[k]! := [1][2]\cdots [k]$ is the $q$-fatorial, and $\qbin{n}{k}:=\frac{[n]!}{[k]![n-k]!}$ is the $q$-binomial coefficient.} 
\begin{equation}
R(v_i\otimes v_j) = \sum_{k\geq 0} (q^{\frac{1}{2}}-q^{-\frac{1}{2}})^k[k]! \qbin{\frac{n-1-i}{2}}{k}\qbin{\frac{n-1+j}{2}}{k}q^{\frac{1}{4}\qty(ij -k(i-j) -k(k+1))} v_{i+2k}\otimes v_{j-2k},
\end{equation}
where the range of summation is $0\leq k\leq \min\{\frac{n-1-i}{2},\frac{n-1+j}{2}\}$. 
The flipped $R$-matrix is the $R$-matrix followed by the permutation operator $P: v_i\otimes v_j \mapsto v_j\otimes v_i$: 
\[\check{R} = PR.\]
It satisfies the quantum Yang-Baxter equation: 
\begin{equation}
    \check{R}_{23}\check{R}_{12}\check{R}_{23} = \check{R}_{12}\check{R}_{23}\check{R}_{12}.
\end{equation}
Therefore it induces a representation of the braid group. 
\begin{figure}[h]
    \centering
    \includegraphics[scale=0.5]{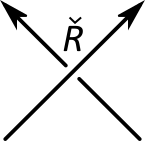}
    \caption{Graphically, $\check{R}$ corresponds to the positive crossing, and $\check{R}^{-1}$ the negative crossing.}
    \label{fig:Rmatrix}
\end{figure}

The \emph{unreduced} $V_n$-colored Jones polynomial of a knot $K$ presented as the closure of a braid $\beta$ is given by
\begin{equation}
    \Tilde{J}_K(n;q) = q^{-\frac{n^2-1}{4}w(\beta)}\Tr_q(\bm{\beta}),
\end{equation}
where $\bm{\beta} = \bm{\beta}_{V_n}$ denotes the representation induced by the braid, and $w(\beta)$ is the writhe of the braid (the number of positive crossings minus the number of negative crossings). In other words, up to a framing factor, the unreduced colored Jones polynomial of a braid closure is given as the state sum determined by the $R$-matrix and the following evaluations and coevaluations: 
\begin{align*}
    \overrightarrow{ev} &: V_n \otimes V_n^* \rightarrow \Bbbk,\quad v_i \otimes v_j^* \mapsto \delta_{ij} q^{\frac{i}{2}},\\
    \overleftarrow{ev} &: V_n^* \otimes V_n \rightarrow \Bbbk,\quad v_i^* \otimes v_j \mapsto \delta_{ij},\\
    \overrightarrow{coev} &: \Bbbk \rightarrow V_n^* \otimes V_n,\quad 1\mapsto \sum_i q^{-\frac{i}{2}}v_i^*\otimes v_i,\\
    \overleftarrow{coev} &: \Bbbk \rightarrow V_n \otimes V_n^*,\quad 1\mapsto \sum_i v_i\otimes v_i^*.
\end{align*}

\begin{figure}[h]
    \centering
    \includegraphics[scale=0.4]{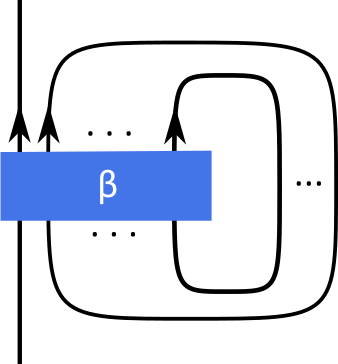}
    \caption{Right-closure of a braid with the leftmost strand open}
    \label{fig:rightclosure}
\end{figure}
For reduced invariants, we should open up a strand as in Figure \ref{fig:rightclosure}. Regarding $\bm{\beta}$ as a matrix, let $\bm{\beta}'$ be the submatrix of $\bm{\beta}$ obtained by restricting to the subspace spanned by vectors of the form $v_{i_1}\otimes \cdots \otimes v_{i_N}$ with fixed $i_1$ and arbitrary $i_2,\cdots,i_N$. Define the \emph{reduced quantum trace} to be
\begin{equation}
    \Tr_q'(\bm{\beta}):=\Tr_q(\bm{\beta}').
\end{equation}
Then the \emph{reduced} $V_n$-colored Jones polynomial of a knot $K$ presented as the closure of a braid $\beta$ is given by
\begin{equation}
    J_K(n;q) = q^{-\frac{n^2-1}{4}w(\beta)}\Tr_q'(\bm{\beta}).
\end{equation}
The choice of $i_1$ doesn't matter, because from the point of view of state sum, the resulting map $V_n\rightarrow V_n$ is central and hence $C\cdot \mathrm{id}$ for some $C \in \Bbbk$. This factor $C$ is the reduced $V_n$-colored Jones polynomial. 

Finally, we should mention that, more generally there is an $R$-matrix acting on $V_n \otimes V_m$: 
\begin{equation}
R(v_i\otimes w_j) = \sum_{k\geq 0} (q^{\frac{1}{2}}-q^{-\frac{1}{2}})^k[k]! \qbin{\frac{n-1-i}{2}}{k}\qbin{\frac{m-1+j}{2}}{k}q^{\frac{1}{4}\qty(ij -k(i-j) -k(k+1))} v_{i+2k}\otimes w_{j-2k}.
\end{equation}
The quantum trace of the corresponding representation gives the colored Jones polynomials where now each strand is colored differently.

\section{Large color $R$-matrix}\label{sec:largecolorR}
\subsection{Highest weight and lowest weight Verma modules $V^h_\infty$ and $V^l_\infty$}
There are two infinite-dimensional modules of $U_q(\mathfrak{sl}_2)$ over $\Bbbk_x := \mathbb{C}(q^{\frac{1}{2}},x^{\frac{1}{2}})$, appearing naturally in the large color limit of $V_n$. We'll formally write $n = \log_q x$, but we'll mostly keep $x$ generic, so $n$ doesn't have to be an integer.

The \emph{highest weight Verma module} $V_{\infty,\lambda}^{h}$ with highest weight $\lambda$ is the following module. 
\[\cdots\overset{e}{\underset{f}{\rightleftharpoons}}V^h_\infty(\lambda-2)\overset{e}{\underset{f}{\rightleftharpoons}} V^h_\infty(\lambda)\]
There is a basis $\{v^j\}_{j\geq 0}$ with $v^j\in V^h_\infty(\lambda - 2j)$ for which the action of $U_q(\mathfrak{sl}_2)$ is given by
\begin{align*}
    e v^j &= [j]v^{j-1},\\
    f v^{j} &= [\lambda-j]v^{j+1},\\
    q^{\frac{h}{2}}v^{j} &= q^{\frac{\lambda - 2j}{2}}v^{j}.
\end{align*}
If we choose $\lambda\in \mathbb{Z} +\mathbb{Z}\log_q x$, then this is a module over $\Bbbk_x$. 

Similarly, the \emph{lowest weight Verma module} $V_{\infty,\lambda}^{l}$ with the lowest weight $\lambda$ is the following module. 
\[V^l_\infty(\lambda)\overset{e}{\underset{f}{\rightleftharpoons}}V^l_\infty(\lambda+2)\overset{e}{\underset{f}{\rightleftharpoons}}\cdots\]
There is a basis $\{v_j\}_{j\geq 0}$ with $v_j\in V^l_\infty(\lambda + 2j)$ for which the action of $U_q(\mathfrak{sl}_2)$ is given by
\begin{align*}
    e v_j &= [-\lambda-j]v_{j+1},\\
    f v_j &= [j]v_{j-1},\\
    q^{\frac{h}{2}}v_j &= q^{\frac{\lambda + 2j}{2}}v_j.
\end{align*}
Again, if we choose $\lambda\in \mathbb{Z} +\mathbb{Z}\log_q x$, this is a module over $\Bbbk_x$. 


\subsection{$R$-matrices on Verma modules}\label{subsec:Rmatrix}
In the large color limit of $V_n$, we naturally get $V_{\infty,\log_q x-1}^{h}$ and $V_{\infty,1-\log_q x}^{l}$. For each of them, there is an $R$-matrix: for $V_{\infty,\log_q x-1}^{h}$, 
\begin{equation}
    \check{R}(v^{i}\otimes v^{j}) = q^{\frac{n^2-1}{4}}x^{-\frac{1}{2}}\sum_{k\geq 0}\qbin{i}{k}\prod_{1\leq l\leq k}\qty(1-x^{-1}q^{j+l})\cdot  x^{-\frac{(i-k)+j}{2}}q^{(i-k)j+\frac{(i-k)k}{2}+\frac{(i-k)+j+1}{2}} v^{j+k}\otimes v^{i-k},
\end{equation}
and for $V_{\infty,1-\log_q x}^{l}$, 
\begin{equation}
    \check{R}(v_j\otimes v_i) = q^{\frac{n^2-1}{4}}x^{-\frac{1}{2}}\sum_{k\geq 0}\qbin{i}{k}\prod_{1\leq l\leq k}\qty(1-x^{-1}q^{j+l}) \cdot x^{-\frac{(i-k)+j}{2}}q^{(i-k)j+\frac{(i-k)k}{2}+\frac{(i-k)+j+1}{2}} v_{i-k}\otimes v_{j+k}.
\end{equation}
Here, the quadratic part, $q^{\frac{n^2-1}{4}}$, is a framing factor. Since we'll only consider $0$-framed knots, we can ignore this factor throughout this paper. 

Observe that the two $R$-matrices are the same up to flip $P$, meaning that the highest weight $R$-matrix for a crossing seen from above the page is the same as the lowest weight $R$-matrix for the crossing seen from below the page. See Figure \ref{fig:highlowduality}.
\begin{figure}[h]
    \centering
    \includegraphics[scale=0.65]{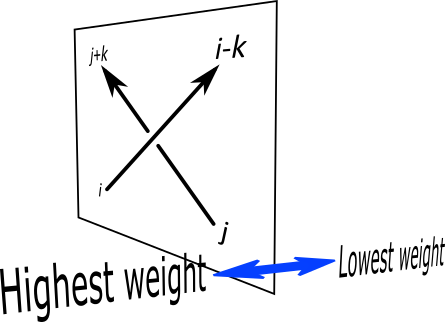}
    \caption{Highest-lowest duality, or Weyl symmetry, of the $R$-matrices}
    \label{fig:highlowduality}
\end{figure}
Also note that only positive powers of $q$ and negative powers of $x$ appear for positive crossings. This is an important property that will be used later in this paper when we study positive braid knots. 

To get their inverses, we flip them left and right, and replace $q$ and $x$ with their inverses. That is, 
\begin{equation}
\check{R}^{-1} = P\check{R}\vert_{\substack{x\rightarrow x^{-1}\\ q\rightarrow q^{-1}}}P.
\end{equation}
Hence only negative powers of $q$ and positive powers of $x$ appear for negative crossings. 

More generally, we use different parameters for the Verma modules assigned to the two strands. The $R$-matrix for $V^h_{\infty,\log_q x-1}\otimes V^h_{\infty,\log_q y-1}$ is
\begin{align}
    \check{R}(v^i\otimes w^j) &= q^{\frac{nm-1}{4}}x^{-\frac{1}{4}}y^{-\frac{1}{4}}\\
    &\quad\times \sum_{k\geq 0}\qbin{i}{k}\prod_{1\leq l\leq k}\qty(1-y^{-1}q^{j+l})x^{-\frac{j}{2}-\frac{k}{4}}y^{-\frac{i-k}{2}+\frac{k}{4}}q^{(i-k)j+\frac{(i-k)k}{2}+\frac{(i-k)+j+1}{2}} w^{j+k}\otimes v^{i-k},\nonumber
\end{align}
and the $R$-matrix for $V^l_{\infty,1-\log_q y}\otimes V^l_{\infty,1-\log_q x}$ is
\begin{align}
    \check{R}(w_j\otimes v_i) &= q^{\frac{nm-1}{4}}x^{-\frac{1}{4}}y^{-\frac{1}{4}}\\
    &\quad\times\sum_{k\geq 0}\qbin{i}{k}\prod_{1\leq l\leq k}\qty(1-y^{-1}q^{j+l})x^{-\frac{j}{2}-\frac{k}{4}}y^{-\frac{i-k}{2}+\frac{k}{4}}q^{(i-k)j+\frac{(i-k)k}{2}+\frac{(i-k)+j+1}{2}} v_{i-k}\otimes w_{j+k}.\nonumber
\end{align}

These large color $R$-matrices satisfy the quantum Yang-Baxter equation 
\begin{equation}
\check{R}_{23}\check{R}_{12}\check{R}_{23} = \check{R}_{12}\check{R}_{23}\check{R}_{12}.
\end{equation}
Therefore they induce an infinite-dimensional representation of the braid group. 
\begin{prop}
    The large color $R$-matrix induces an infinite-dimensional, $U_q(\mathfrak{sl}_2)$-equivariant  representation of the braid group $B_N$ on $V_\infty^{\otimes N}$. 
\end{prop}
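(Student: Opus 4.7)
The plan is to separate the proposition into two claims: (i) that the assignment $\sigma_i \mapsto \check{R}_{i,i+1}$ defines a well-defined representation of the braid group $B_N$ on $V_\infty^{\otimes N}$, and (ii) that this action commutes with the $U_q(\mathfrak{sl}_2)$-action obtained by iterating the coproduct.

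For (i), I would check the two Artin relations separately. The far commutation $\sigma_i\sigma_j = \sigma_j\sigma_i$ for $|i-j|\geq 2$ is automatic since the operators $\check{R}_{i,i+1}$ and $\check{R}_{j,j+1}$ act on disjoint pairs of tensor factors. The braid relation $\sigma_i\sigma_{i+1}\sigma_i = \sigma_{i+1}\sigma_i\sigma_{i+1}$ is exactly the quantum Yang-Baxter equation displayed immediately above the proposition, which we take as given.

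For (ii), the guiding principle is the intertwining property of the universal $R$-matrix: $\mathcal{R}\Delta(X) = \Delta^{op}(X)\mathcal{R}$ implies $\check{R}\Delta(X) = \Delta(X)\check{R}$ after composition with the swap, so that each local $\check{R}_{i,i+1}$ commutes with the global coproduct action of $U_q(\mathfrak{sl}_2)$ on $V_\infty^{\otimes N}$. The cleanest way to transport this to the Verma module setting is a polynomial specialization argument. Inspection of the formula in Section \ref{subsec:Rmatrix} shows that on each basis vector $v^i\otimes v^j$ the sum is truncated at $k \leq i$ by the $q$-binomial $\qbin{i}{k}$, so $\check{R}$ acts by a finite linear combination whose matrix coefficients are Laurent polynomials in $q^{1/2}$, $x^{1/2}$, $y^{1/2}$. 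The coproduct actions of $e$, $f$, $q^{\pm h/2}$ on $V^h_\infty \otimes V^h_\infty$ are likewise given by finite sums on each basis vector, so both sides of the equivariance identity are polynomial. Specializing $x = q^n$, $y = q^m$ for positive integers $n,m$, the highest-weight Verma module $V^h_{\infty,n-1}$ admits $V_n$ as a quotient, the large color $\check{R}$ descends to the finite-dimensional $R$-matrix of Section \ref{subsec:finRmatrix}, and equivariance there is classical. Since the polynomial identities hold for infinitely many such specializations, they hold identically in $x$ and $y$.

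The main obstacle I anticipate is not conceptual but bookkeeping: one must carefully match the basis conventions of Sections \ref{subsec:finRmatrix} and \ref{subsec:Rmatrix} to verify that the large color $R$-matrix really does restrict to the known finite-dimensional $R$-matrix under the surjection $V^h_{\infty,n-1}\twoheadrightarrow V_n$, and analogously for mixed Verma modules with parameters $\log_q x - 1$ and $\log_q y - 1$. Once this identification is in hand, both the Yang-Baxter equation (if one does not want to take it for granted) and the $U_q(\mathfrak{sl}_2)$-equivariance follow by polynomial continuation from the finite-dimensional case, with no convergence analysis required thanks to the termination of the sums defining $\check{R}$.
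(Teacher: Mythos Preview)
Your approach is correct and is essentially the same as the paper's: both argue by polynomial continuation from the finite-dimensional case, using that the matrix entries of $\check{R}$ are Laurent polynomials in $q^{1/2}$ and $x^{1/2}$ (and $y^{1/2}$) and that the desired identities hold at every specialization $x=q^n$, $n\in\mathbb{Z}_{>0}$. The paper's proof is terser --- it only spells out the Yang--Baxter part and leaves the $U_q(\mathfrak{sl}_2)$-equivariance implicit --- whereas you explicitly treat equivariance by the same specialization device, which is a welcome elaboration rather than a different route.
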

\begin{proof}
    We know that for each $x=q^n$ with $n \in \mathbb{Z}_+$, the $R$-matrix satisfies the quantum Yang-Baxter equation. Because each entry of the large color $R$ matrix is a Laurent polynomial in $x$ and $q$, it follows that it satisfies the quantum Yang-Baxter equation as well. 
\end{proof}

\subsection{Semi-classical limit of the $R$-matrix and Burau representation}
In this subsection we briefly review the connection between the $R$-matrix and Burau representation, following \cite{R1}. 
Since the story is similar for the lowest weight $R$-matrix, let's focus on the highest weight $R$-matrix. In the semi-classical limit $q\rightarrow 1$, ignoring the prefactor $q^{\frac{n^2-1}{4}}x^{-\frac{1}{2}}$ for the moment\footnote{The factor $q^{\frac{n^2-1}{4}}$ is a framing factor, and hence will be cancelled out for the $0$-framing. The term $x^{-\frac{1}{2}}$ will be collected into $x^{-\frac{w(\beta)}{2}}$.}, the highest weight $R$-matrix induces the following representation of the braid group on the total weight $1$ subspace (written in basis $v^0\otimes\cdots\otimes v^0\otimes v^1\otimes v^0\otimes\cdots\otimes v^0$).
\[\sigma_i \mapsto \begin{pmatrix}I_{i-1} & 0 & 0 & 0 \\ 0 & x^{-\frac{1}{2}}(x^{\frac{1}{2}}-x^{-\frac{1}{2}}) & x^{-\frac{1}{2}} & 0 \\ 0 & x^{-\frac{1}{2}} & 0 & 0 \\ 0 & 0 & 0 & I_{N-1-i}\end{pmatrix}\]

This representation turns out to be equivalent to the Burau representation. In fact, exactly in this normalization, this representation on the total weight $1$ subspace extends to an algebra morphism on the total space $V_\infty^{\otimes N}$. That is, for $V_\infty^{\otimes N}$, we identify (after the rescaling)
\[v^{i_1}\otimes \cdots \otimes v^{i_N} \leftrightarrow z_1^{i_1}\cdots z_N^{i_N} \in \Bbbk[z_1,\cdots,z_N].\]
Alexander polynomial can be described in terms of this Burau representation. Given a braid $\beta$ with $N$ strands, there corresponds an $N\times N$ matrix $\widetilde{\bm{\beta}}$ coming from the Burau representation. Take the $(N-1)\times(N-1)$ submatrix $\widetilde{\bm{\beta}}'$ corresponding to strands $2, \cdots, N$. Then, the Alexander polynomial is
\[\Delta_K(x) = x^{-\frac{N-1-w(\beta)}{2}}\det(I_{N-1}-\widetilde{\bm{\beta}}'),\]
where $w(\beta)$ is the writhe of the braid. 

Now we're ready to state the connection between the semi-classical limit of the large color Jones polynomial and the Alexander polynomial. Let $\hat{\bm{\beta}}$ be the endomorphism of the algebra $\Bbbk[z_1,\cdots,z_N]$ induced by $\widetilde{\bm{\beta}}$, and let $\hat{\bm{\beta}}'$ be the endomorphism of the subalgebra $\Bbbk[z_2,\cdots,z_N]$ obtained by projection. As we take the quantum trace, we get
\[\lim_{\substack{q\rightarrow 1\\ q^n\text{ fixed}}}J_K(n;q) = x^{\frac{N-1-w(\beta)}{2}}\sum_{w\geq 0}\Tr \hat{\bm{\beta}}'(w) y^w\bigg\vert_{y\rightarrow 1} = \frac{x^{\frac{N-1-w(\beta)}{2}}}{\det(I_{N-1}-y\widetilde{\bm{\beta}}')}\bigg\vert_{y\rightarrow 1} = \frac{1}{\Delta_K(x)}.\]
This was the proof of Melvin-Morton conjecture due to Rozansky. In fact, Rozansky pushed this line of reasoning further to study higher order terms (terms of higher $\hbar$ degree) and proved Melvin-Morton-Rozansky conjecture. See \cite{R1} for the details.

\section{$F_K$ from large color $R$-matrix}\label{sec:main}
In this section, we use the large color $R$-matrix to compute $F_K$ for various knots. Conceptually, it is clear that the large color $R$-matrix would be useful in computing $F_K$, as $F_K$ is, morally, an analytic continuation of the colored Jones polynomials, and the large color $R$-matrix is exactly the analytic continuation of the finite color $R$-matrices. 

\subsection{Closing up the braid}
In the previous sections we have seen that given a braid $\beta$ with $N$ strands, the large color $R$-matrix induces an automorphism $\bm{\beta}=\bm{\beta}_{V_\infty}$ of $V_\infty^{\otimes N}$. As we close up the braid, we need to take the quantum trace of this map. This is an infinite sum, and we would like to make sense of it. 

\subsubsection{An invariant of transverse knots from the large color $R$-matrix}
Let's suppose that we are using the the highest weight Verma modules (i.e.\ $\bm{\beta}=\bm{\beta}_{V_\infty^h}$) and that the quantum trace $\Tr_q \bm{\beta}$ is absolutely convergent, so that it converges to a well-defined formal power series regardless of the order of summation in the state sum. 
Then, $\Tr_q \bm{\beta}$ is invariant under conjugation and braid isotopy. Moreover, it is invariant under positive stabilization (Figure \ref{fig:positiveMarkov}), as positive stabilization only involves finite number of reorderings at each degree.\footnote{If we were using the lowest weight Verma modules, then it would be invariant under negative stabilization instead.} 
\begin{figure}[h]
    \centering
    \includegraphics[scale=0.5]{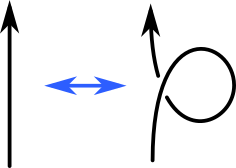}
    \caption{Positive stabilization}
    \label{fig:positiveMarkov}
\end{figure}
Therefore, the transverse Markov theorem \cite{OS, W} immediately implies the following. 
\begin{prop}\label{prop:transverse}
    Whenever $\Tr_q \bm{\beta}$ is absolutely convergent, it is an invariant of the corresponding transverse knot.\footnote{For an introduction to transverse knots, see \cite{Etnyre}.} 
\end{prop}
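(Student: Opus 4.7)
The plan is to reduce the claim to the transverse Markov theorem of Orevkov–Shevchishin and Wrinkle, which characterizes transverse knots as equivalence classes of braids modulo three moves: braid isotopy, conjugation in the braid group, and positive stabilization/destabilization. Accordingly, I would verify each of these three invariances for $\Tr_q \bm{\beta}$, invoking absolute convergence at the steps where reordering of an infinite state sum is required.

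First I would dispatch braid isotopy: by the Yang–Baxter equation established in the previous section, the operator $\bm{\beta}$ on $V_\infty^{\otimes N}$ depends only on the braid $\beta$ up to isotopy, so $\Tr_q \bm{\beta}$ is automatically invariant. Second, conjugation invariance $\Tr_q(\bm{\gamma}\bm{\beta}\bm{\gamma}^{-1}) = \Tr_q \bm{\beta}$ follows from the cyclicity of the quantum trace, which is valid term-by-term; absolute convergence is what lets us pass from cyclicity on finite truncations to cyclicity of the full sum.

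The main step, and the one that really uses the highest-weight convention, is invariance under positive stabilization $\beta \leadsto \beta \sigma_N \in B_{N+1}$. Here one must show
\[
\Tr_q^{(N+1)} \bm{(\beta \sigma_N)} \;=\; \Tr_q^{(N)} \bm{\beta},
\]
which, setting aside the (trivial, $0$-framing) framing factor, reduces to the local calculation that tracing the highest-weight $R$-matrix over the new tensor factor $V_\infty^h$ attached at the bottom of the new strand equals the identity on $V_\infty^h$. Concretely, applying $\check{R}$ to $v^i \otimes v^0$ and then evaluating the quantum trace $\sum_k q^{(\log_q x - 1 - 2k)/2}\cdot(\text{coefficient of }v^k\otimes v^k)$ should collapse, after a short $q$-binomial manipulation, to the basis vector $v^i$ weighted by $x^{-1/2}$ absorbed into the writhe factor; the key identity is that the new strand contributes only the trivial $k=0$ summand on the diagonal because for $j=0$ the product $\prod_{1\le l\le k}(1-x^{-1}q^{j+l})$ telescopes against the diagonal-projection constraint. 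The absolute convergence hypothesis is what allows me to perform this \emph{local} trace before the global one, i.e.\ to use Fubini on the double sum.

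The hard part will be pinning down the local positive-stabilization identity in a clean form and justifying the interchange of summations; everything else is formal. One subtle point worth flagging: the analogous computation for \emph{negative} stabilization does not work for the highest-weight $R$-matrix (only positive powers of $q$ and negative powers of $x$ appear, as noted earlier), which is exactly why the invariant detects the transverse class rather than the smooth class, and why using $V_\infty^l$ instead would yield invariance under negative stabilization. Once the three moves are verified, the transverse Markov theorem immediately upgrades $\Tr_q \bm{\beta}$ to a transverse knot invariant, completing the proof.
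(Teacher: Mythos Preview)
Your overall strategy---reduce to the transverse Markov theorem and verify invariance under braid isotopy, conjugation, and positive stabilization---is exactly the paper's. The paper in fact gives the entire argument in the paragraph \emph{preceding} the proposition, and it is even terser than yours.

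Where your write-up wobbles is the local positive-stabilization computation. You write ``applying $\check{R}$ to $v^i\otimes v^0$ and then evaluating the quantum trace $\sum_k\dots(\text{coefficient of }v^k\otimes v^k)$''; this conflates two different things. Tracing out the new strand means summing over the input state $v^j$ on that strand with the quantum weight, and picking out the diagonal term where the output on that strand is again $v^j$. From $\check{R}(v^i\otimes v^j)=\sum_k c_{i,j,k}\,v^{j+k}\otimes v^{i-k}$ the diagonal constraint is $i-k=j$, forcing $k=i-j$ and hence $0\le j\le i$. The crucial observation---and this is what the paper means by ``positive stabilization only involves finite number of reorderings at each degree''---is that this partial trace is a \emph{finite} sum for each fixed $i$, so no Fubini or convergence argument is needed for the stabilization identity itself; it reduces to the same finite identity that holds for $V_n$, hence holds identically since the entries are Laurent polynomials in $x,q$. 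Your remark about the product $\prod(1-x^{-1}q^{j+l})$ ``telescoping against the diagonal-projection constraint'' is not the mechanism at work. By contrast, for \emph{negative} stabilization the analogous diagonal constraint on $\check{R}^{-1}$ does not truncate the sum over $j$, which is the honest reason the highest-weight construction fails there (and dually why $V_\infty^l$ handles negative but not positive stabilization).

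So: right architecture, but tighten the stabilization paragraph to the finiteness-of-the-partial-trace observation rather than an explicit $q$-binomial collapse.
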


\subsubsection{Stratification of the trace}
As we will see, the quantum trace is absolutely convergent for positive braids, and this will be related to $F_K$. Unfortunately, in general, the quantum trace is not always absolutely convergent. 
How do we make sense of the quantum trace or the state sum when it is not absolutely convergent? We need to specify the order of summation, and one natural choice comes from the stratification of the trace. 

As noted in \cite{R1}, the quantum trace is naturally stratified by the total weight. Let's use the highest weight Verma modules for the moment. We define the total weight of the vector $v^{i_1}\otimes \cdots \otimes v^{i_N}$ to be $w = \sum_{1\leq k\leq N} i_k$, so that the total weight $0$-vector is the highest weight vector (vacuum). The weight $w$-subspace of $V_\infty^{\otimes N}$ is the subspace spanned by vectors of total weight $w$. Note that each weight $w$-subspace of $V_\infty^{\otimes N}$ is invariant under $\bm{\beta}$. Let's call this subrepresentation $\bm{\beta}(w)$. 
Then the stratified trace is given by
\begin{equation}
    \Tr_{q,\eta}\bm{\beta} := x^{\frac{N}{2}}q^{-\frac{N}{2}}\sum_{w\geq 0}\Tr \bm{\beta}(w) q^{-w} \eta^w.
\end{equation}
In the limit $\eta\rightarrow 1$ this is the usual quantum trace, with a specified order of summation. As a formal power series in $\eta$, each coefficient, $\Tr \bm{\beta}(w)$, is a Laurent polynomial in $q$ and $x$. The fact that each subspace $V_\infty^{\otimes N}(w)$ is finite dimensional immediately implies the following. 
\begin{prop}
    $\Tr_{q,\eta}\bm{\beta}$ is invariant under conjugation of the braid. That is, $\Tr_{q,\eta}\bm{\beta}$ is an invariant of the annular braid closure in the solid torus. 
\end{prop}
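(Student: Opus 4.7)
The plan is to reduce conjugation invariance to the standard cyclicity of the trace on finite-dimensional vector spaces, applied weight-by-weight. First I would verify that the braid action preserves the weight grading $V_\infty^{\otimes N} = \bigoplus_{w\geq 0}V_\infty^{\otimes N}(w)$. A direct inspection of the $R$-matrix formula $\check{R}(v^i\otimes v^j) = \sum_k c_k\, v^{j+k}\otimes v^{i-k}$ shows that every summand has total weight $i+j$, so each generating crossing preserves total weight, and this property propagates under composition to all of $\bm{\beta}$.

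Next I would observe that $V_\infty^{\otimes N}(w)$ is finite-dimensional, with basis indexed by tuples $(i_1,\dots,i_N)$ of nonnegative integers summing to $w$. Hence $\bm{\beta}(w)$ is an endomorphism of a finite-dimensional $\Bbbk_x$-vector space, and $\Tr \bm{\beta}(w)$ is a genuine trace, lying in the Laurent polynomial ring in $q^{\frac{1}{2}}$ and $x^{\frac{1}{2}}$ (as is visible from the $R$-matrix entries).

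Given another braid $\gamma \in B_N$, the homomorphism property of the braid representation combined with weight-preservation gives $(\bm{\gamma\beta\gamma^{-1}})(w) = \bm{\gamma}(w)\bm{\beta}(w)\bm{\gamma}(w)^{-1}$ as endomorphisms of $V_\infty^{\otimes N}(w)$. Ordinary cyclicity then yields $\Tr(\bm{\gamma\beta\gamma^{-1}})(w) = \Tr \bm{\beta}(w)$ for every $w$. Since the prefactor $x^{N/2}q^{-N/2}$ and the weight-dependent coefficient $q^{-w}\eta^w$ depend only on $N$ and $w$ and not on the braid word, summing coefficientwise in $\eta$ yields $\Tr_{q,\eta}(\bm{\gamma\beta\gamma^{-1}}) = \Tr_{q,\eta}\bm{\beta}$ as a formal power series in $\eta$. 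No convergence issue arises — that is precisely the point of passing to the $\eta$-stratification.

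For the geometric content of the second sentence, I would invoke the solid-torus (annular) version of Markov's theorem: two elements of $B_N$ yield isotopic closures in $D^2 \times S^1$ if and only if they are conjugate in $B_N$. Stabilization is not permitted in the annular setting because it would alter the winding number around the core. Conjugation invariance therefore upgrades automatically to an invariant of the annular braid closure. The main — and essentially only — thing to check carefully is that the $\eta$-stratification genuinely reduces each $\eta$-coefficient to an endomorphism of a finite-dimensional space; once that is in place the argument is pure bookkeeping, and there is no serious obstacle.
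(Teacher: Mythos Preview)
Your proposal is correct and follows exactly the approach the paper intends: the paper's entire argument is the one sentence ``The fact that each subspace $V_\infty^{\otimes N}(w)$ is finite dimensional immediately implies the following,'' and you have simply unpacked that sentence (weight-preservation of $\check{R}$, finite-dimensionality of each weight space, ordinary trace cyclicity, and the annular Markov theorem for the geometric restatement). There is nothing to add or correct.
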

If we naively set $\eta=1$, this is not a well-defined power series in $q$ and $x$ in general, but still it makes sense in some specializations: 
\begin{itemize}
    \item For $x=q^n$, $n\geq 1$, with generic $q$, the sum becomes $\Tilde{J}_K(n;q)$, the $n$-colored Jones polynomial.
    \item For $x=q^n$ with $n\leq 0$ and $q$ a root of unity, the sum is finite and is well-defined. Let's denote this invariant $\Tilde{J}_K(n;q)$. In particular, for $x=1$, this is the Kashaev's invariant. Moreover, there's the Weyl symmetry
    \begin{equation}
    \Tilde{J}_K(n;q) = \Tilde{J}_K(-n;q)
    \end{equation}
    for every root of unity. 
\end{itemize}
This $x=1$ specialization is the one that we denoted by $\langle K \rangle$ in Corollary \ref{strangeidentity}. Note that at each root of unity, not only the values of $\langle K \rangle$ but also derivatives of arbitrary order makes sense.

Before getting into the actual computation of the quantum trace, let's fix our notational convention first. 
Unless otherwise stated, we'll use the right-closure as in Figure \ref{fig:rightclosure}.\footnote{Note that, due to the highest-lowest duality mentioned in Section \ref{subsec:Rmatrix}, using the right-closure with highest (lowest, resp.) weight Verma modules is the same as using the left-closure with lowest (higest, resp.) weight Verma modules.}
Because we do not close the leftmost strand, we need to use the reduced version of the (stratified) quantum trace, given by
\begin{align}
        \Tr_{q,\eta}'\bm{\beta}_{V_\infty^h} &:= x^{\frac{N-1}{2}}q^{-\frac{N-1}{2}}\sum_{w\geq 0}\Tr \bm{\beta}_{V_\infty^h}'(w) q^{-w} \eta^w,\\
        \Tr_{q,\eta}'\bm{\beta}_{V_\infty^l} &:= x^{-\frac{N-1}{2}}q^{\frac{N-1}{2}}\sum_{w\geq 0}\Tr \bm{\beta}_{V_\infty^l}'(w) q^{w} \eta^w,
\end{align}
and
\begin{equation}\label{eq:limitofstrat}
    \Tr_q'\bm{\beta} = \lim_{\eta \rightarrow 1}\Tr_{q,\eta}'\bm{\beta},
\end{equation}
where $\bm{\beta}'$ denotes the induced map on $V_\infty^{\otimes N-1}$ corresponding to the right-closure (in the same manner we did for finite-dimensional representations in Section \ref{subsec:finRmatrix}).
In our convention for braid presentation, we write from bottom to top; for instance, $\sigma_1\sigma_2$ means $\sigma_1$ on the bottom and $\sigma_2$ on the top. Finally, $F^+_K$, $F^-_K$, $F^{\pm}_K$ denote the positive expansion (power series expansion in $x$), negative expansion (power series expansion in $x^{-1}$), and the balanced expansion of $F_K$ (average of the former two), respectively. Weyl symmetry implies that $F_K^+(x,q) = -F_K^-(x^{-1},q)$. The balanced expansion $F_K^{\pm} = \frac{1}{2}\qty(F_K^+ + F_K^-)$ is the one used in \cite{GM}. 

\begin{rmk}
It is important to keep in mind that the resulting map $V_\infty^h\rightarrow V_\infty^h$ for the 1-1 tangle is \emph{not} central in general. The simplest example where this is not central is the negative stabilization. Nevertheless, it turns out it is central for positive braid knots as we are about to see and various other examples. 
\end{rmk}

\subsection{Positive braid knots}
It turns out that the infinite sum converges absolutely for \emph{positive braid knots} $K$, and each coefficient of $F_K$ is a finite polynomial.\footnote{Note that this is consistent with the fact that positive braid knots are fibered and thus have monic Alexander polynomials, so the power series expansion of $\frac{1}{\Delta_K(x)}$ has integer coefficients.} Below, we prove Theorem \ref{pbthm} mentioned in the introduction. 
\begin{proof}[Proof of Theorem \ref{pbthm}]
    Let $K$ be a positive braid knot represented by a positive braid $\beta$. We will prove that 
    \begin{equation}\label{eq:pbthm}
        (x^{\frac{1}{2}}-x^{-\frac{1}{2}})\Tr_q' \bm{\beta}_{V_\infty^h} = F_K^-(x,q),
    \end{equation}
    and that if we write $F_K^-(x,q) = -x^{-\frac{1}{2}}\sum_{m\geq 0}f_m^K(q)x^{-m}$, each coefficient $f_m^K(q)$ is a polynomial in $q$. 
    
    We first show that the quantum trace $\Tr_q' \bm{\beta}_{V_\infty^h}$ converges absolutely, and that $\Tr_q' \bm{\beta}_{V_\infty^h} \in \mathbb{Z}[q][[x^{-1}]]$, thanks to positivity of the braid. Recall that\footnote{Here we have erased the quadratic part, as it will be cancelled out in the end anyway since we only consider $0$-framed knots.} 
    \begin{equation}
    \check{R}(v^{i}\otimes v^{j}) = \sum_{k\geq 0}\qbin{i}{k}\prod_{1\leq l\leq k}\qty(1-x^{-1}q^{j+l})\cdot  x^{-\frac{(i-k)+j}{2}}q^{(i-k)j+\frac{(i-k)k}{2}+\frac{(i-k)+j+1}{2}} v^{j+k}\otimes v^{i-k}.
    \end{equation}
    Observe that the highest $x$-degree of the coefficient is $-\frac{(i-k)+j}{2}$ and the lowest $q$-degree is $((i-k)+\frac{1}{2})(j+\frac{1}{2})+\frac{1}{4}$. In other words, for each crossing, the bigger the weight $i-k$ of the top right strand and the bigger the weight $j$ of bottom right strand, the smaller the $x$-degree gets and the bigger the $q$-degree gets.\footnote{Since these bounds come from only the right strands, it is crucial that we use the right-closure and leave the left-most strand open (Figure \ref{fig:rightclosure}).}
    
    \begin{figure}
    \centering
    \begin{minipage}{.5\textwidth}
        \centering
        \includegraphics[scale=0.5]{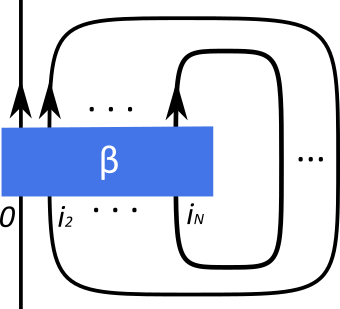}
        \captionof{figure}{}
        \label{fig:prooffigure1}
    \end{minipage}%
    \begin{minipage}{.5\textwidth}
        \centering
        \includegraphics[scale=0.5]{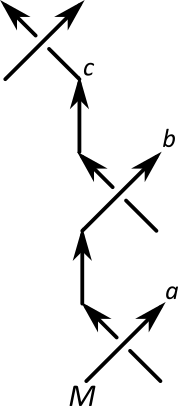}
        \captionof{figure}{}
        \label{fig:prooffigure2}
    \end{minipage}
    \end{figure}
    Consider a state where the outer strands are assigned $v^0, v^{i_2}, \cdots, v^{i_N}$ from left to right; see Figure \ref{fig:prooffigure1}. 
    Suppose that $M = i_k = \max\{i_2,\cdots,i_N\}$. The braid $\beta$, thought of as a word in $\sigma_1,\cdots,\sigma_{N-1}$, must have a $\sigma_{k-1}$ because the closure of $\beta$ is a knot. Let's focus on appearances of $\sigma_k$ before the first $\sigma_{k-1}$. The following argument works regardless of the number of $\sigma_k$'s, but for the purpose of illustration, let's say there are two $\sigma_{k}$'s before the first $\sigma_{k-1}$ appears; see Figure \ref{fig:prooffigure2}. 
    In the figure, $a$, $b$ and $c$ denotes that the corresponding strands are assigned $v^a$, $v^b$ and $v^c$, respectively. It is straightforward to see that $a+b+c \geq M$. Our previous observation on the bound on the $x$- and $q$-degrees of the $R$-matrix immediately implies that the weight of such a state has $x$-degree at most $-\frac{M}{2}$ and $q$-degree at least $\frac{M}{2}$. Because for each fixed $M$ there are only finitely many possible such states, it immediately follows that the quantum trace converges absoluately, and $\Tr_q' \bm{\beta}_{V_\infty^h} \in \mathbb{Z}[q][[x^{-1}]]$. 
    
    It is known \cite{E} that any two positive braid presentations of a given positive braid knot are related via a sequence of positive stabilizations. Since the quantum trace is invariant under positive stabilization (Proposition \ref{prop:transverse}), $\Tr_q' \bm{\beta}_{V_\infty^h}$ is a well-defined invariant of positive braid knots. 
    
    Now it suffices to prove that the perturbative series of this invariant agrees with that of the large color expansion (Melvin-Morton-Rozansky expansion) of colored Jones polynomials, which basically follows from the result of Rozansky \cite{R1}. 
\end{proof}
We should mention that, in a similar manner, if $K$ is a negative braid knot, we have
\begin{equation}
    (x^{\frac{1}{2}}-x^{-\frac{1}{2}})\Tr_q' \bm{\beta}_{V_\infty^l} = F_K^+(x,q) = F_{m(K)}^{+}(x,q^{-1}),
\end{equation}
where $m(K)$ denotes the mirror knot. 

\begin{rmk}
Our proof also shows that we could have chosen any other vector on the open strand and still get a quantum trace that lives in $\mathbb{Z}[q][[x^{-1}]]$ whose perturbative expansion is the large color expansion of the colored Jones. Since a polynomial in $q$ whose $\hbar$-expansion vanishes should be identically $0$, it follows that the map $V_\infty^h\rightarrow V_\infty^h$ we get from the 1-1 tangle as in Figure \ref{fig:prooffigure1} is central. 
\end{rmk}

\subsubsection{Positive braid knots up to 10 crossings}
The simplest examples of positive braid knots are positive torus knots. In fact, they are the simplest type of knots, and their $F_K$'s are given in \cite{GM}. Even their higher rank analogues are known (up to overall $q$-power); see \cite{P}. So here let's focus on non-torus knots. Up to 10 crossings, there are two non-torus positive braid knots: $\mathbf{10}_{139}$ and $\mathbf{10}_{152}$.

$\mathbf{10}_{139}$ has a positive braid presentation $\beta = \sigma_1^4\sigma_2\sigma_1^3\sigma_2^2$. 
Computing the quantum trace, we get
\begin{align*}
    F_{\mathbf{10}_{139}}^{-}(x,q) &= q^4 x^{-\frac{7}{2}} -2q^6 x^{-\frac{13}{2}} +q^7 x^{-\frac{15}{2}} -q^8 x^{-\frac{17}{2}} +(2q^9+q^{10}) x^{-\frac{19}{2}} +O(x^{-\frac{21}{2}}).
\end{align*}

$\mathbf{10}_{152}$ has a positive braid presentation $\beta = \sigma_1^3\sigma_2^2\sigma_1^2\sigma_2^3$. 
Computing the quantum trace, we get
\begin{align*}
    F_{\mathbf{10}_{152}}^{-}(x,q) &= q^4 x^{-\frac{7}{2}} +q^5 x^{-\frac{11}{2}} -3q^6 x^{-\frac{13}{2}} +(q^6+2q^7)x^{-\frac{15}{2}} +O(x^{-\frac{17}{2}}).
\end{align*}

\subsection{Fibered strongly quasi-positive braid knots}
Fibered strongly quasi-positive braid knots are more general than just positive braid knots, but they are still special in a sense that there is a unique transverse structure with the maximal transverse self-linking number; see \cite{E}. Based on examples we list below, we conjecture that Theorem \ref{pbthm} extends to fibered strongly quasi-positive braid knots. 
\begin{conj}
    Let $K$ be a fibered strongly quasi-positive braid knot. Then 
    \begin{equation}
        (x^{\frac{1}{2}}-x^{-\frac{1}{2}})\Tr_q' \bm{\beta}_{V_\infty^h} = F_K^-(x,q).
    \end{equation}
    Moreover, if we write $F_K^-(x,q) = -x^{-\frac{1}{2}}\sum_{m\geq 0}f_m^K(q)x^{-m}$, each coefficient $f_m^K(q)$ is a Laurent polynomial in $q$. 
\end{conj}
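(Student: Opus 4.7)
The plan is to mirror the proof of Theorem~\ref{pbthm} with the positive Artin generators replaced by the positive band generators
\[
a_{i,j} \;=\; (\sigma_i\sigma_{i+1}\cdots\sigma_{j-2})\,\sigma_{j-1}\,(\sigma_i\sigma_{i+1}\cdots\sigma_{j-2})^{-1},\qquad 1\leq i<j\leq N,
\]
of which every strongly quasi-positive braid is, by definition, a word. The first step is to compute the effective endomorphism $\bm{a}_{i,j}$ of $(V_\infty^h)^{\otimes N}$ induced by a single band. Although $a_{i,j}$ contains negative $R$-matrices inside the conjugating factor, the negative crossings should be cancelled against the positive crossings coming from the inverse word, leaving a ``long-distance'' interaction between the $i$-th and $j$-th strands. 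The key property to verify is that this effective band $R$-matrix shares the positivity property of the ordinary highest-weight $R$-matrix in Section~\ref{subsec:Rmatrix}: every matrix entry lies in $\mathbb{Z}[q^{\pm 1}][x^{-1}]$, and its $x$- and $q$-degrees obey bounds of the form ``the larger the weights on the right-going strands, the smaller the $x$-degree and the larger the $q$-valuation.'' The only difference from the pure positive case is that one allows $q^{-1}$'s, which is exactly what yields Laurent rather than ordinary polynomials in the final answer.

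Granting such a band $R$-matrix, the convergence argument of Theorem~\ref{pbthm} transfers almost verbatim. For a state with outer weights $v^0, v^{i_2},\ldots,v^{i_N}$ and $M=\max\{i_2,\ldots,i_N\}$ attained at position $k$, knot-ness of the braid closure forces the word in bands to contain a band touching position $k$, and the degree bounds then give that every contribution has $x$-degree at most $-M/2$ with a $q$-valuation growing linearly in $M$. Summing over states with fixed total weight gives absolute convergence of $\Tr_q'\bm{\beta}_{V_\infty^h}$ as an element of $\mathbb{Z}[q^{\pm 1}][[x^{-1}]]$, and each coefficient $f_m^K(q)$ is manifestly a Laurent polynomial in $q$.

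Next one needs well-definedness of the resulting $q$-series as a knot invariant. Whereas the positive braid proof cited Etnyre's theorem that any two positive braid presentations are related by positive stabilization, the fibered strongly quasi-positive setting admits an analogous Markov-type theorem (the unique transverse representative is realized by the monodromy, and two sqp braid presentations of a fibered sqp knot are related by braid isotopy, conjugation and positive Markov stabilization). Since $\Tr_q'\bm{\beta}_{V_\infty^h}$ is invariant under conjugation and braid isotopy by construction, and under positive stabilization by Proposition~\ref{prop:transverse}, the invariant depends only on the knot type $K$. The identification with $F_K^-(x,q)$ is then the same appeal to Rozansky's Melvin-Morton-Rozansky expansion that ends the proof of Theorem~\ref{pbthm}.

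The step I expect to be the main obstacle is the first one: proving the positivity and degree bounds of the band $R$-matrix. The negative crossings in the conjugating factor contribute positive powers of $x$ and negative powers of $q$, and one has to see that all such ``bad'' contributions cancel when composed with the positive crossings on the other side. Brute-force expansion looks unwieldy, so the more promising route is to find a closed form for $\bm{a}_{i,j}$ directly, for instance by interpreting the band geometrically as a single twist along an embedded arc and recognizing its action on Verma modules as a natural long-distance analogue of the highest-weight $R$-matrix from Section~\ref{subsec:Rmatrix}. Once such a closed form is in hand, the remaining degree bookkeeping and the Markov-type invariance argument should be straightforward.
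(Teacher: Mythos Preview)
The statement you are attempting to prove is stated in the paper as a \emph{conjecture}, not a theorem. The paper offers no proof; its entire support for the conjecture consists of the three computational examples $m(\mathbf{10}_{145})$, $\mathbf{10}_{154}$, $\mathbf{10}_{161}$ listed immediately afterward. So there is no ``paper's own proof'' to compare your proposal against, and what you have written is not a proof but an outline of a possible attack on an open problem.

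That said, the outline is a sensible one, and you correctly locate the genuine obstruction. A few comments on where the difficulties actually lie. First, the positivity of the band $R$-matrix is not merely ``unwieldy'' to check by brute force; it is not at all clear that it holds in the form you need. The conjugating factor $(\sigma_i\cdots\sigma_{j-2})^{-1}$ applied to a state with large weight on an intermediate strand produces positive $x$-powers of order comparable to that weight, and while these may cancel against the positive part of the conjugation on diagonal paths, one must control the off-diagonal terms where the intermediate strands exchange weight with strands $i$ and $j$. In particular, even if every entry of $\bm{a}_{i,j}$ lands in $\mathbb{Z}[q^{\pm 1}][x^{-1}]$, the degree bound you quote (``the larger the weights on the right-going strands, the smaller the $x$-degree'') does not obviously survive: in the positive-braid proof the bound came from the weights on the two \emph{right} strands of a single crossing, whereas a band touches many strands simultaneously and there is no clean ``right side''. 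Your convergence argument needs a quantitative bound tied to $M=\max\{i_2,\ldots,i_N\}$, and this will require more than just knowing which ring the entries live in.

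Second, the Markov-type statement you invoke is essentially correct but deserves a precise citation: it follows from the transverse simplicity of fibered strongly quasi-positive knots at maximal self-linking number (Hedden, Etnyre--Van Horn-Morris) together with the transverse Markov theorem, plus the observation that any strongly quasi-positive braid presentation realises the Bennequin bound. You should state this chain of implications explicitly rather than assert the conclusion. Finally, note that even with all of this in hand, the last step (matching the $\hbar$-expansion with Rozansky's MMR expansion) used in Theorem~\ref{pbthm} the fact that each $f_m^K(q)$ was a polynomial, so that agreement of $\hbar$-series forces equality; with Laurent polynomials in $q$ the same argument goes through, but you should say why.
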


Up to 10 crossings, there are 3 fibered strongly quasi-positive braid knots which are not positive braids: $m(\mathbf{10}_{145})$, $\mathbf{10}_{154}$ and $\mathbf{10}_{161}$. See Table \ref{tab:SQpositive}. 
\begin{table}[h]
    \begin{tabular}{||c c||}
    \hline
    Knot & SQ-positive braid \\ [0.5ex] 
    \hline\hline
    $\mathbf{3}_1$ & $\sigma_1^3$ \\
    $\mathbf{5}_1$ & $\sigma_1^5$ \\
    $\mathbf{7}_1$ & $\sigma_1^7$ \\
    $\mathbf{8}_{19}$ & $\sigma_1^3\sigma_2\sigma_1^3\sigma_2$ \\
    $\mathbf{9}_1$ & $\sigma_1^9$ \\
    $\mathbf{10}_{124}$ & $\sigma_1^5\sigma_2\sigma_1^3\sigma_2$ \\
    $\mathbf{10}_{139}$ & $\sigma_1^4\sigma_2\sigma_1^3\sigma_2^2$ \\
    $m(\mathbf{10}_{145})$ & $\sigma_3\sigma_2^2\sigma_1\sigma_2^{-1}\sigma_3^2\sigma_2^2\sigma_1\sigma_2^{-1}$ \\
    $\mathbf{10}_{152}$ & $\sigma_1^3\sigma_2\sigma_1^2\sigma_2^3$ \\
    $\mathbf{10}_{154}$ & $\sigma_1^2\sigma_2\sigma_1^{-1}\sigma_2\sigma_1\sigma_3\sigma_2^3\sigma_3$ \\
    $\mathbf{10}_{161}$ & $\sigma_1^3\sigma_2\sigma_1^{-1}\sigma_2\sigma_1^2\sigma_2^2$ \\
    \hline
    \end{tabular}
    \caption{\label{tab:SQpositive}All fibered strongly quasi-positive braid knots up to 10 crossings \cite{KnotInfo}\protect\footnotemark}
\end{table}
\footnotetext{When it comes to Alexander-Briggs notation of knots, we follow the convention used in \cite{KnotInfo}, which is sometimes the mirror of the one in \cite{KnotAtlas}.}

$m(\mathbf{10}_{145})$ has a strongly quqasi-positive braid presentation $\beta = \sigma_3\sigma_2^2\sigma_1\sigma_2^{-1}\sigma_3^2\sigma_2^2\sigma_1\sigma_2^{-1}$. 
Computing the quantum trace, we get
\begin{align*}
    F_{m(\mathbf{10}_{145})}^{-}(x,q) &= q^2 x^{-\frac{3}{2}} -2q^2 x^{-\frac{5}{2}} +(2q+2q^3+q^4)x^{-\frac{7}{2}}\\
    &\quad +(-2q^{-1}-2q^2-2q^3-2q^4-4q^5)x^{-\frac{9}{2}} +O(x^{-\frac{11}{2}}).
\end{align*}

$\mathbf{10}_{154}$ has a strongly quasi-positive braid presentation $\beta = \sigma_1^2\sigma_2\sigma_1^{-1}\sigma_2\sigma_1\sigma_3\sigma_2^3\sigma_3$. Computing the quantum trace, we get
\begin{align*}
    F_{\mathbf{10}_{154}}^{-}(x,q) &= q^3 x^{-\frac{5}{2}} -q^3 x^{-\frac{7}{2}} +(q^2+3q^4)x^{-\frac{9}{2}} +(-1-2q^3-2q^4-5q^5-q^6)x^{-\frac{11}{2}} +O(x^{-\frac{13}{2}}).
\end{align*}

$\mathbf{10}_{161}$ ``\emph{the Perko pair}'' has a strongly quqasi-positive braid presentation $\beta = \sigma_1^3\sigma_2\sigma_1^{-1}\sigma_2\sigma_1^2\sigma_2^2$. Computing the quantum trace, we get
\begin{align*}
    F_{\mathbf{10}_{161}}^{-}(x,q) &= q^3 x^{-\frac{5}{2}} - q^3 x^{-\frac{7}{2}} +(q^2+q^4)x^{-\frac{9}{2}} +(-1-q^3-q^4-2q^5)x^{-\frac{11}{2}} +O(x^{-\frac{13}{2}}).
\end{align*}

\subsection{Some positive double twist knots}
So far we have only dealt with fibered knots. For any non-fiberd knot $K$ with non-monic Alexander polynomial, the power series expansion of $\frac{1}{\Delta_K(x)}$ has non-integer coefficients, meaning that $F_K$, as a power series in $x$, should have coefficients which are not any more Laurent polynomials but power series in $q$. 
It turns out that the method or large color $R$-matrix is quite robust that it works for many non-fibered knots as well. In this subsection, we experimentally compute $F_K$ for some double twist knots. One subtle point that we should emphasize is that we cannot guarantee absolute convergence of the infinite sum any more, so all the quantum trace in this subsection should be understood as the limit of stratified quantum trace in the sense of \eqref{eq:limitofstrat}.

Let $K_{m,n}$ be the double twist knot with $m$ and $n$ full-twists; see Figure \ref{fig:dt}. We will call a double twist knot \emph{positive} if it is of the form $K_{m,n}$ or $K_{m+\frac{1}{2},-n}$ for some positive integers $m, n$; see Table \ref{tab:doubletwist}. 
\begin{figure}[h]
    \centering
    \includegraphics[scale=0.6]{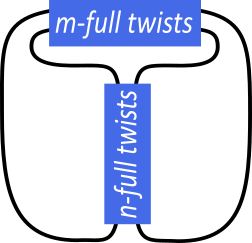}
    \caption{The double twist knot $K_{m,n}$}
    \label{fig:dt}
\end{figure}
\begin{table}[h]
    \begin{tabular}{||c c||}
    \hline
    Knot & double-twist notation \\ [0.5ex] 
    \hline\hline
    $m(\mathbf{3}_1)$ & $K_{1,1}$ \\
    $\mathbf{4}_1$ & $K_{1,-1}$ \\
    $m(\mathbf{5}_2)$ & $K_{2,1}$ \\
    $m(\mathbf{6}_1)$ & $K_{2,-1}$ \\
    $m(\mathbf{7}_2)$ & $K_{3,1}$ \\
    $m(\mathbf{7}_3)$ & $K_{\frac{3}{2},-2}$ \\
    $m(\mathbf{7}_4)$ & $K_{2,2}$ \\
    $m(\mathbf{8}_1)$ & $K_{3,-1}$ \\
    $m(\mathbf{9}_2)$ & $K_{4,1}$ \\
    $m(\mathbf{9}_3)$ & $K_{\frac{3}{2},-3}$ \\
    $m(\mathbf{9}_4)$ & $K_{\frac{5}{2},-2}$ \\
    $m(\mathbf{9}_5)$ & $K_{3,2}$ \\
    \hline
    \end{tabular}
    \caption{\label{tab:doubletwist} Some positive double twist knots with small number of crossings}
\end{table}
The two smallest double twist knots, trefoil ($K_{1,1}$) and figure-eight ($K_{1,-1}$), were already studied in \cite{GM}, so let's move on to the next simplest example, $K_{2,1} = m(\mathbf{5}_2)$. 
Using the braid presentation $\beta = \sigma_2^{-3}\sigma_1^{-1}\sigma_2\sigma_1^{-1}$, we get
\begin{equation}
    (x^{\frac{1}{2}}-x^{-\frac{1}{2}})\Tr_q'\bm{\beta_{V_\infty^l}} = F_{m(\mathbf{5}_2)}^{+}(x,q) = x^{\frac{1}{2}}\sum_{j\geq 0} f_j^{m(\mathbf{5}_2)}(q) x^j,
\end{equation}
where the first few terms are given by
\begin{align*}
    f_0^{m(\mathbf{5}_2)}(q) &= -q^{-1} +1 -q^2 +q^5 -q^9 +q^{14} -q^{20} +q^{27} -O(q^{35}),\\
    f_1^{m(\mathbf{5}_2)}(q) &= -q^{-1} +1 +q -q^2 -q^3 -q^4 +q^5 +q^6 +q^7 +q^8 -O(q^9),\\
    f_2^{m(\mathbf{5}_2)}(q) &= -q^{-1} +2 +q -q^2 -2q^3 -2q^4 +q^5 +q^6 +3q^7 +2q^8 -q^{10} -O(q^{11}),\\
    f_3^{m(\mathbf{5}_2)}(q) &= 2 +q -2q^2 -2q^3 -3q^4 +2q^6 +4q^7 +4q^8 +2q^9 -3q^{11} -O(q^{12}).
\end{align*}
From the pattern, we find the closed expressions for the first two coefficients:
\begin{align*}
    f_0^{m(\mathbf{5}_2)}(q) &= -q^{-1}\sum_{j\geq 0}(-1)^j q^{\frac{j(j+1)}{2}},\\ 
    f_1^{m(\mathbf{5}_2)}(q) &= -q^{-1}\sum_{j\geq 0}(-1)^j q^{\frac{j(j+1)}{2}}\frac{1-q^{j+1}}{1-q}.\\
\end{align*}
Given these closed expressions, it is easy to check that their $\hbar$-expansions indeed agree with the ones computed in \cite{CGPS}. Moreover, one can numerically check that it is annihilated by the (unreduced) quantum $A$-polynomial given by \footnote{The data for the quantum $A$-polynomial can be found in \cite{GS, GK, NRZS}}
\begin{equation}
    \hat{A}_{m(\mathbf{5}_2)}(\hat{x},\hat{y},q) = a_0(x,q) + a_1(x,q)\hat{y} + a_2(x,q)\hat{y}^2 + a_3(x,q)\hat{y}^3 + a_4(x,q)\hat{y}^4,
\end{equation}
where
\begin{align*}
    a_0(x,q) &= -q^9 x^7 \left(q^3 x+1\right) \left(q^5 x^2-1\right) \left(q^7 x^2-1\right),\\
    a_1(x,q) &= q^{11/2} x^2 (q x+1) \left(q^3 x+1\right) \left(q^7 x-1\right) \left(q^9 x^6+q^8 x^6-q^8 x^4-3 q^7 x^5-q^7 x^4-q^6 x^5\right.\\
    &\quad +2 q^6 x^4+2 q^6 x^3+q^5 x^4-q^5 x^2-q^4 x^3-q^3 x^4-q^3 x^3+q^3 x^2+2 q^2 x^3+2 q^2 x^2\\
    &\quad\left.-q x^2-2 q x+1\right),\\
    a_2(x,q) &= -q^2 \left(q^2 x-1\right) \left(q^2 x+1\right) \left(q x^2-1\right) \left(q^7 x^2-1\right) \left(q^{12} x^6+q^{11} x^5-2 q^{10} x^5-3 q^9 x^4\right.\\
    &\quad +2 q^8 x^4-q^8 x^3+2 q^7 x^3-q^6 x^4-4 q^6 x^3-q^5 x^3-3 q^5 x^2+q^4 x^3+2 q^4 x^2+q^3 x\\
    &\quad\left.-q^2 x^2-2 q^2 x+1\right),\\
    a_3(x,q) &= q^{1/2} \left(q x^2-1\right) (q x+1) \left(q^3 x+1\right) \left(q^{16} x^6-2 q^{13} x^5-q^{13} x^4+q^{11} x^4+2 q^{10} x^4\right.\\
    &\quad +2 q^{10} x^3-q^9 x^4-q^8 x^3-q^8 x^2-q^7 x^3-q^7 x^2+2 q^6 x^3+2 q^6 x^2+q^5 x^2-q^3 x^2\\
    &\quad\left.-3 q^3 x-q^2 x+q+1\right),\\
    a_4(x,q) &= (q x+1) \left(q x^2-1\right) \left(q^3 x^2-1\right).\\
\end{align*}
In fact, annihilation of $F_K^+$ by $\hat{A}_K$ implies that all the coefficients $f_m^K(q)$ are determined by the first few of them. It the case of $m(\mathbf{5}_2)$, it turns out that all the coefficients are $\mathbb{Q}(q)$-linear combinations of the first two.\footnote{It seems that for the twist knot $K_{m,1}$, the first $|m|$ coefficients determine all the other ones using recursion.} For instance, 
\begin{align*}
    f_2^{m(\mathbf{5}_2)}(q) &= \frac{1+q-q^2}{-q+q^3}f_0^{m(\mathbf{5}_2)}(q) + \frac{-1-2q+q^2}{-q+q^3}f_1^{m(\mathbf{5}_2)}(q),\\
    f_3^{m(\mathbf{5}_2)}(q) &= \frac{-2-q-q^2+2q^3+q^4-q^5}{q^2-q^4-q^5+q^7}f_0^{m(\mathbf{5}_2)}(q) + \frac{2+q+3q^2-q^3-q^6}{q^2-q^4-q^5+q^7}f_1^{m(\mathbf{5}_2)}(q),
\end{align*}
and using recursion it is easy to find closed expressions of the coefficients up to arbitrary order. 

We give another example, $K_{\frac{3}{2},-2} = m(\mathbf{7}_3)$. Using the braid presentation $\beta = \sigma_2^{-5}\sigma_1^{-1}\sigma_2\sigma_1^{-1}$, we get
\begin{equation}
    (x^{\frac{1}{2}}-x^{-\frac{1}{2}})\Tr_q'\bm{\beta_{V_\infty^l}} = F_{m(\mathbf{7}_3)}^{+}(x,q) = x^{\frac{1}{2}}\sum_{j\geq 0} f_j^{m(\mathbf{7}_3)}(q) x^j
\end{equation}
where
\begin{align*}
    f_0^{m(\mathbf{7}_3)} &= 0,\\
    f_1^{m(\mathbf{7}_3)} &= -q^{-2}+q^{-1}-q+q^4-q^8+q^{13}-q^{19}+q^{26}-q^{34}+q^{43}-O(q^{53}),\\
    f_2^{m(\mathbf{7}_3)} &= -q^{-2}+q^{-1}+1-q-q^2-q^3+q^4+q^5+q^6+q^7-q^8-q^9-q^{10}-q^{11}-O(q^{12}),\\
    f_3^{m(\mathbf{7}_3)} &= q^{-3}-2q^{-2}+q^{-1}+2-q^2-3q^3+2q^6+3q^7+q^8-q^{10}-2q^{11}-4q^{12}-q^{13}-O(q^{14}),\\
    f_4^{m(\mathbf{7}_3)} &= q^{-3}-2q^{-2}+2+q+q^2-3q^3-2q^4-2q^5+3q^7+3q^8+3q^9+2q^{10}-4q^{12}-O(q^{13}),\\
    f_5^{m(\mathbf{7}_3)} &= -q^{-5}+q^{-4}+2q^{-3}-3q^{-2}-q^{-1}+2q+3q^2-5q^5-3q^6-q^7+q^8+2q^9+O(q^{10}),
\end{align*}
and so on.

\subsubsection{General expressions from Lovejoy-Osburn}
We note that a general result for positive double twist knots follows from the work \cite{LO1, LO2} of Lovejoy and Osburn. Slight modification of Theorem 1.1 in \cite{LO1} gives us the following formula of $F_K$ for double twist knots $K_{m,p}$ with $m,p>0$ full twists: 
\begin{align}
    \frac{F_{K_{m,p}}^+(x,q)}{x^{\frac{1}{2}}-x^{-\frac{1}{2}}} &= q^{-1}x\sum_{0\leq n_1\leq \cdots \leq n_{2mp-1}}\qty(q^{-1}x)_{n_{2mp-1}}(-1)^{n_{2mp-1}}q^{\binom{n_{2mp-1}+1}{2}}\prod_{\substack{1\leq i<j\leq 2mp-1 \\ m\nmid i}}q^{-\epsilon_{i,j,m}n_in_j}\\
    &\quad\quad \times \prod_{i=1}^{2p-1}(-1)^{n_{mi}}x^{(-1)^{i+1}n_{mi}}q^{-\binom{n_{mi}+1}{2}}\prod_{i=1}^{2mp-2}q^{n_in_{i+1}-\gamma_{i,m}n_i}\qbin{n_{i+1}}{n_i}, \nonumber
\end{align}
where 
\begin{equation}
    \epsilon_{i,j,m} := \begin{cases}1 &\text{if }j\equiv -i\text{ or }-i-1\mod{2m}\\ -1 &\text{if }j\equiv i\text{ or }i-1\mod{2m}\\0&\text{otherwise}\end{cases},
\end{equation}
and
\begin{equation}
    \gamma_{i,m} := \begin{cases} 1 &\text{if }i\equiv 1,\cdots,m-1\mod{2m} \\ -1 &\text{otherwise}\end{cases}.
\end{equation}

Similarly, from a slight modification of Theorem 1.2 in \cite{LO2}, we get the following formula of $F_K$ for $K_{m+\frac{1}{2},-p}$ with positive integers $m,p$: 
\begin{align}
    \frac{F^+_{K_{m+\frac{1}{2},-p}}(x,q)}{x^{\frac{1}{2}}-x^{-\frac{1}{2}}} &= q^{-p}x^p \sum_{0\leq n_1\leq \cdots\leq n_{(2m+1)p}}\qty(q^{-1}x)_{n_{(2m+1)p}}(-1)^{n_{(2m+1)p}}q^{\binom{n_{(2m+1)p}+1}{2}}\\
    &\quad\times \prod_{\substack{1\leq i<j\leq (2m+1)p\\ (2m+1)\nmid i\\ j\not\equiv m+1(\text{mod }2m+1)}}q^{-\Delta_{i,j,k}n_in_j}\prod_{\substack{i=1\\i\equiv m+1,2m+1(\text{mod }2m+1)}}^{(2m+1)p-1}(-1)^{n_i}x^{n_i}q^{-\binom{n_i+1}{2}}\nonumber\\
    &\quad\times \prod_{i=1}^{(2m+1)p-1}q^{-\beta_{i,m}n_i}\qbin{n_{i+1}}{n_i},\nonumber
\end{align}
where 
\begin{equation}
    \Delta_{i,j,m} := \begin{cases}1 &\text{if }j\equiv -i\text{ or }-i+1\mod{2m+1}\\ -1 &\text{if }j\equiv i\text{ or }i+1\mod{2m+1}\\0&\text{otherwise}\end{cases},
\end{equation}
and
\begin{equation}
    \beta_{i,m} := \begin{cases} 1 &\text{if }i\equiv 1,\cdots,m\mod{2m+1}\\ -1 &\text{if }i\equiv m+1,\cdots,2m\mod{2m+1} \\ 0 &\text{if }i\equiv 0\mod{2m+1}\end{cases}.
\end{equation}

\subsection{Other examples}
Here we just give two more examples, $m(\mathbf{7}_5)$ and $m(\mathbf{8}_{15})$, which are neither fibered nor double twist. 

Using the braid presentation $\beta = \sigma_2^{-4}\sigma_1^{-1}\sigma_2\sigma_1^{-2}$ of $m(\mathbf{7}_5)$, we get
\begin{equation}
    (x^{\frac{1}{2}}-x^{-\frac{1}{2}})\Tr_q'\bm{\beta}_{V_\infty^l} = F_{m(\mathbf{7}_5)}^{+}(x,q) = x^{\frac{1}{2}}\sum_{j\geq 0} f_j^{m(\mathbf{7}_5)}(q) x^j,
\end{equation}
where the first few terms are
\begin{align*}
    f_0^{m(\mathbf{7}_5)} &= 0,\\
    f_1^{m(\mathbf{7}_5)} &= -q^{-2}+q^{-1}-q+q^4-q^8+q^{13}-q^{19}+q^{26}-q^{34}+O(q^{43}),\\
    f_2^{m(\mathbf{7}_5)} &= -2q^{-2}+2q^{-1}+2-2q-2q^2-2q^3+2q^4+2q^5+2q^6+2q^7-O(q^8),\\
    f_3^{m(\mathbf{7}_5)} &= q^{-3}-4q^{-2}+3q^{-1}+5-4q^2-8q^3-q^4+q^5+O(q^6).
\end{align*}

Using the braid presentation $\beta = \sigma_1^{-1}\sigma_3^{-1}\sigma_2^{-3}\sigma_1^{-2}\sigma_2\sigma_3\sigma_2^{-1}\sigma_3^{-1}$ of $m(\mathbf{8}_{15})$, we get
\begin{equation}
    (x^{\frac{1}{2}}-x^{-\frac{1}{2}})\Tr_q'\bm{\beta}_{V_\infty^l} = F_{m(\mathbf{8}_{15})}^{+}(x,q) = x^{\frac{1}{2}}\sum_{j\geq 0} f_j^{m(\mathbf{8}_{15})}(q) x^j,
\end{equation}
and the first few terms are
\begin{align*}
    f_0^{m(\mathbf{8}_{15})} &= 0,\\
    f_1^{m(\mathbf{8}_{15})} &= -q^{-2}+2q^{-1}-1-3q+2q^2+2q^3+3q^4-3q^5-4q^6-q^7-O(q^8),\\
    f_2^{m(\mathbf{8}_{15})} &= -3q^{-2}+6q^{-1}+1-12q-2q^2+7q^3+19q^4+4q^5-14q^6-O(q^7),\\
    f_3^{m(\mathbf{8}_{15})} &= -6q^{-2}+15q^{-1}+5-27q-21q^2+5q^3+59q^4+O(q^5).
\end{align*}

\section{Some surgeries}\label{sec:surgeries}
Same $3$-manifold can appear in many different guises. In this section we test our Conjecture \ref{conj:linksurg} through some consistency checks, using different surgery presentations of the same manifold. Moreover we use the surgery formula to reverse-engineer $F_K$ for simple links. 

\subsection{Some consistency checks}
\subsubsection{Exceptional surgeries on $m(\mathbf{5}_2)$}
We start by studying our quintessential non-fibered knot $m(\mathbf{5}_2)$. 
The $0, -1, -2, -3$-surgeries on $m(\mathbf{5}_2)$ are plumbed $3$-manifolds. The case of $0$-surgery was already studied in \cite{CGPS}, so let's focus on $-1, -2, -3$-surgeries. 

The $-1$-surgery on $m(\mathbf{5}_2)$ can be seen in different guises; see Figure \ref{fig:Sigma2311}. 
\begin{figure}[h]
    \centering
    \includegraphics[scale=0.7]{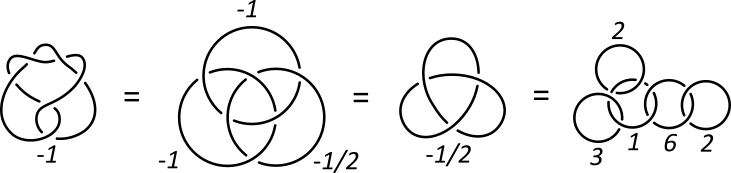}
    \caption{Different guises of $-1$-surgery on $m(\mathbf{5}_2)$}
    \label{fig:Sigma2311}
\end{figure}
It gives
\begin{align*}
\hat{Z}(S_{-1}^3(m(\mathbf{5}_2))) &= \hat{Z}(S^3_{-\frac{1}{2},-1,-1}(\mathbf{Bor})) = \hat{Z}(S^{3}_{-\frac{1}{2}}(m(\mathbf{3}_1))) = \hat{Z}(\Sigma(2,3,11))\\
&= q^{-\frac{3}{2}}\qty(1-q-q^9+q^{14}-q^{19}+q^{26}+q^{50}-O(q^{61})).
\end{align*}
This is consistent with the computation of $\hat{Z}$ using the formula for plumbed manifolds \cite{GPPV, GM}. 
In a similar manner, we find perfect agreement between $\hat{Z}$'s of $-2$ and $-3$-surgery on $m(\mathbf{5}_2)$ computed from the surgery formula and those computed from their plumbing description. 
The result is summarized in Table \ref{tab:52exceptionalsurgery}.
\begin{table}
    \centering
    \begin{tabular}{c c c}
        \hline\hline
        \multicolumn{2}{c}{$Y=S_p^3(m(\mathbf{5}_2))$} & $\hat{Z}_b(Y;q)$ \\
        \hline\hline
        $p=-1$ & $\Sigma(2,3,11) = M(-2;\frac{1}{2},\frac{2}{3},\frac{9}{11})$ & $q^{-\frac{3}{2}}\qty(1-q-q^9+q^{14}-q^{19}+q^{26}+q^{50}-O(q^{61}))$\\
        $p=-2$ & $M(-2;\frac{1}{2},\frac{3}{4},\frac{5}{7})$ & \begin{tabular}{@{}c@{}}$q^{-\frac{5}{4}}\qty(1 -q +q^{18} -q^{25} +q^{31} -q^{40}+O(q^{91}))$ \\ $-q^{\frac{5}{4}}\qty(1 -q^3 +q^6 -q^{11}+q^{45}-O(q^{56}))$\end{tabular}\\
        $p=-3$ & $M(-2;\frac{2}{3},\frac{2}{3},\frac{3}{5})$ & \begin{tabular}{@{}c@{}}$q^{-1}(1-q+q^8-q^{13}+q^{17} -q^{24}+O(q^{45}))$ \\ $-q^{\frac{4}{3}}(1-q^3+q^{27}-q^{36}+O(q^{84}))$ \end{tabular}\\
        \hline\hline
    \end{tabular}
    \caption{Some exceptional surgeries on $m(\mathbf{5}_2)$ and its $\hat{Z}$}
    \label{tab:52exceptionalsurgery}
\end{table}

Of course most surgery coefficients are not exceptional and they give hyperbolic manifolds. We list the first few $-\frac{1}{r}$-surgeries on $m(\mathbf{5}_2)$ in Table \ref{tab:52hyperbolicsurgery}.\footnote{For $-\frac{1}{r}$-surgeries with $r>0$, we have set $\epsilon q^d = q^{-\frac{r+r^{-1}}{4}}$, as it seems to be consistent with all examples we know.} 
\begin{table}
    \centering
    \begin{tabular}{c c}
        \hline\hline
        $Y=S_{-\frac{1}{r}}^3(m(\mathbf{5}_2))$ & $\hat{Z}(Y;q)$ \\
        \hline\hline
        $r=2$ & $q^{-\frac{3}{2}}\qty(1-2q +q^2 +2q^3 -2q^4 -q^5 -q^6 +3q^7 +2q^8 -2q^9 -O(q^{11}))$\\
        $r=3$ & $q^{-\frac{3}{2}}\qty(1 -2q +q^2 +q^3 -q^4 +q^5 -2q^6 +2q^9 +3q^{10} -3q^{11} -O(q^{12}))$\\
        $r=4$ & $q^{-\frac{3}{2}}\qty(1 -2q +q^2 +q^3 -q^4 -q^6 +2q^7 -q^8 -q^9 +q^{10} +q^{11} +O(q^{12}))$\\
        $r=5$ & $q^{-\frac{3}{2}}\qty(1 -2q +q^2 +q^3 -q^4 -q^6 +q^7 +q^9 -2q^{11} +2q^{13} +O(q^{14}))$\\
        \hline\hline
    \end{tabular}
    \caption{Some hyperbolic surgeries on $m(\mathbf{5}_2)$ and its $\hat{Z}$}
    \label{tab:52hyperbolicsurgery}
\end{table}


\subsubsection{$-\frac{1}{r}$-surgeries on double twist knots}
Another neat class of examples where we can check the consistency of the surgery formula comes from $-\frac{1}{r}$-surgeries on double twist knots. Observe that the double twist knot $K_{m,n}$ can be viewed as $-\frac{1}{m},-\frac{1}{n}$-surgery on two components of the Borromean rings (Figure \ref{fig:doubletwistfromBorromean}). 
\begin{figure}[h]
    \centering
    \includegraphics[scale=0.5]{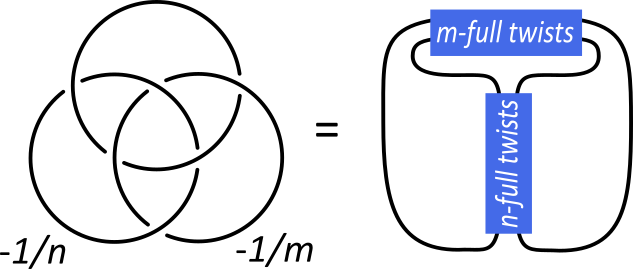}
    \caption{Double twist knots as $-\frac{1}{n}, -\frac{1}{m}$-surgery on two components of the Borromean rings}
    \label{fig:doubletwistfromBorromean}
\end{figure}
Because the three components of the Borromean rings are symmetric, it follows that
\[S_{-\frac{1}{r}}^3(K_{m,n}) = S^3_{-\frac{1}{m},-\frac{1}{n},-\frac{1}{r}}(\mathbf{Bor}) = S_{-\frac{1}{n}}^3(K_{m,r}).\]
Using the surgery formula, one can readily check that the identity holds at the level of $\hat{Z}$. For instance, the first few terms of $F_{K_{2,2}=m(\mathbf{7}_4)}^+(x,q)$ are
\begin{align*}
    f_0^{m(\mathbf{7}_4)} &= -q^{-1}+2-q-2q^2+2q^3+q^5-2q^6+2q^8-2q^9+2q^{10}-q^{11}-2q^{12}+O(q^{14}),\\
    f_1^{m(\mathbf{7}_4)} &= -q^{-1}+2-q-2q^2+3q^3-q^5-4q^6+q^7+6q^8+q^9+2q^{10}-5q^{11}-8q^{12}-O(q^{13}),\\
    f_2^{m(\mathbf{7}_4)} &= -q^{-1}+2-q-q^2+3q^3-q^4-3q^5-5q^6+3q^7+10q^8+5q^9+O(q^{10}),\\
    f_3^{m(\mathbf{7}_4)} &= -q^{-1}+2-q^2+2q^3-3q^4-3q^5-3q^6+5q^7+O(q^8),
\end{align*}
and the $-1$-surgery gives the same result as $\hat{Z}(S^3_{-\frac{1}{2}}(K_{2,1}=m(\mathbf{5}_2));q)$ in Table \ref{tab:52hyperbolicsurgery}.

\subsection{Reverse-engineering $F_K$ for some simple links}
\subsubsection{Partial surgery formula}
The surgery formula in Conjecture \ref{conj:linksurg} is about closed 3-manifolds, but sometimes it is useful to do `partial surgery', i.e.\, Dehn surgery on some components of a link while leaving the other components unfilled (Figure \ref{fig:partialsurgery}). In this way, we can relate $F_K$'s for different links, when one is obtained by another by partial surgery. 
Any full surgery is a sequence of partial surgeries, and regardless of the order of partial surgeries, we should get the same answer. This consistency condition determines the form of partial surgery formula, and in this sense, the following is a corollary of Conjecture \ref{conj:linksurg}: 
\begin{figure}[h]
    \centering
    \includegraphics[scale=0.6]{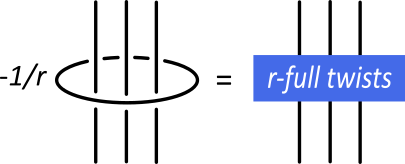}
    \caption{Partial surgery (a Rolfsen move in this case)}
    \label{fig:partialsurgery}
\end{figure}
\begin{conj}
Suppose $L$ is a link with components $L_0, L_1, \cdots, L_l$. Assume that $L_0$ is an unknot. Then doing $-\frac{1}{r}$-surgery on $L_0$, we get\footnote{Here, $\cong$ denotes equality up to $\epsilon q^d$. That is, $f\cong g$ iff $f = \epsilon q^d g$ for some sign $\epsilon \in \{\pm 1\}$ and $d\in \mathbb{Q}$.}
\begin{equation}\label{eq:partialsurg}
    F_{L'}(x_1,\cdots,x_l,q) \cong \mathcal{L}\qty[(x_0^{\frac{1}{2r}}-x_0^{-\frac{1}{2r}})F_L(x_0,\cdots,x_l,q)]
\end{equation}
whenever the r.h.s.\ makes sense, where
\begin{equation}
    \mathcal{L}: x_0^u \mapsto q^{ru^2}\prod_{1\leq i\leq l} x_i^{r\,lk(L_i,L_0) u}.
\end{equation}
\end{conj}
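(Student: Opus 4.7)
The plan is to derive the partial surgery formula from Conjecture \ref{conj:linksurg} by applying the full surgery formula to two different surgery presentations of the same closed 3--manifold and requiring the two outputs to agree. Fix any integer surgery coefficients $(p_1,\dots,p_l)$ on $L_1,\dots,L_l$ and perform simultaneous $-\frac{1}{r}$-surgery on $L_0$; this yields a closed 3--manifold $Y$. By the Rolfsen twist, $Y$ also arises as integer surgery on the link $L'$ with framings $p_i' = p_i + r\cdot lk(L_i,L_0)^2$ and new linking matrix $B' = B_1 + r\,vv^T$, where $v_i = lk(L_i,L_0)$ and $B_1$ is the lower-right $l\times l$ block of the full surgery matrix
\[
B \;=\; \begin{pmatrix} -1/r & v^T \\ v & B_1 \end{pmatrix}.
\]

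The first technical step is to verify the block factorization of the Laplace transform
\[
\mathcal{L}^b_B \;\cong\; \mathcal{L}^{b'}_{B'} \circ \mathcal{L}_{L_0},
\]
where $\mathcal{L}_{L_0}$ is the partial Laplace transform in the $x_0$ variable that appears in the statement, and $\cong$ denotes equality up to an overall factor $\epsilon q^d$. This is a Schur-complement computation for the quadratic form $(u,B^{-1}u)$: the completion of the square in $u_0$ produces exactly the prefactor $q^{r u_0^2}\prod_i x_i^{r\,lk(L_i,L_0)u_0}$, and the remaining quadratic form in $(u_1,\dots,u_l)$ is governed by $B_1 + rvv^T = B'$. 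The shift in $b$ mirrors the shift in spin$^c$ structures under the Rolfsen twist.

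Now feed $F = \prod_{i\ge 0}(x_i^{1/2}-x_i^{-1/2}) F_L(x_0,x_1,\dots,x_l,q)$ into $\mathcal{L}^b_B$, noting that $(x_0^{1/2} - x_0^{-1/2}) = (x_0^{1/(2r)}-x_0^{-1/(2r)})\cdot(\text{Weyl-symmetric factor})$ so the structure required by $\mathcal{L}_{L_0}$ is present. On one hand, by Conjecture \ref{conj:linksurg} applied to $L$, the result is $\hat{Z}_b(Y)$. On the other hand, by the factorization above, the result equals $\mathcal{L}^{b'}_{B'}$ applied to $\mathcal{L}_{L_0}\bigl[(x_0^{1/(2r)}-x_0^{-1/(2r)})F_L\bigr]\cdot\prod_{i\ge 1}(x_i^{1/2}-x_i^{-1/2})$. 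But Conjecture \ref{conj:linksurg} applied to $L'$ also computes $\hat{Z}_b(Y)$ as $\mathcal{L}^{b'}_{B'}$ of $\prod_{i\ge 1}(x_i^{1/2}-x_i^{-1/2})F_{L'}$. Equating the two and varying $(p_1,\dots,p_l,b)$ so that the operators $\mathcal{L}^{b'}_{B'}$ sweep out a separating family for formal series in $x_1,\dots,x_l$, one extracts the partial surgery formula \eqref{eq:partialsurg}.

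The main obstacle is the last step, namely the Laplace-invertibility argument. The Schur-complement identity is routine linear algebra and the Rolfsen-twist description of $B'$ is classical; what is genuinely subtle is making precise the sense in which $\mathcal{L}^{b'}_{B'}$ (as $B_1$ and $b'$ vary) determines its argument, and in which regime both sides of the equality are well-defined power series. As in Conjecture \ref{conj:linksurg} itself, one must carry along the standing caveat ``whenever the r.h.s.\ makes sense,'' i.e.\ require the surgery coefficients $p_i$ to be sufficiently negative so that every Laplace transform converges as a Laurent power series in $q$. Within that regime, however, the invertibility on formal $q$-series is a standard property of Gaussian-type Laplace transforms, so the argument goes through and the formula follows as a genuine corollary of Conjecture \ref{conj:linksurg}.
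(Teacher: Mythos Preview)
Your approach is exactly the one the paper indicates: the paper does not give a detailed argument but simply says ``Any full surgery is a sequence of partial surgeries, and regardless of the order of partial surgeries, we should get the same answer. This consistency condition determines the form of partial surgery formula, and in this sense, the following is a corollary of Conjecture~\ref{conj:linksurg}.'' Your write-up is a careful unpacking of precisely this consistency argument via the Rolfsen twist and a Schur-complement factorization of the Gaussian Laplace transform.

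Two small remarks. First, Conjecture~\ref{conj:linksurg} as stated in the paper covers only \emph{integer} surgeries, so writing the block matrix $B$ with a $-1/r$ entry is, strictly speaking, an extrapolation; the clean way to stay within the stated conjecture is to resolve the $-\tfrac{1}{r}$-surgery on the unknot $L_0$ into a chain of integer-framed unknots and then run your factorization argument. Second, the statement is recorded as a \emph{conjecture} rather than a corollary precisely because of the issues you flag in your last paragraph (convergence of the Laplace transforms, and the invertibility/separating-family step), so your caveats there are well placed and match the paper's own hedging.
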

There is a reason we don't consider partial surgeries of the form $-\frac{p}{r}$ with $p>1$. It is because we want each strand of the link to be null-homologous. When a component of the link represents a non-trivial cycle in the homology of the ambient $3$-manifold, then we can no longer have $x$ as a variable independent from $q$, as it gets mixed up with the $\mathrm{Spin}^c$-variables $b$. Of course, if $L_0$, the link component where we do surgery, is algebraically split from the others, then $x$ and the $\mathrm{Spin}^c$-variables $b$ do not interact, and we can use the same formula as in Conjecture \ref{conj:surgery}.

\subsubsection{Torus links}
The partial surgery formula is useful in computing $F_K$ for links. In order to illustrate this, we give some examples of torus links.  Recall that for any tree $\Gamma$, there is a naturally associated link $L_\Gamma$; Figure \ref{fig:treelink}. 
\begin{figure}[h]
    \centering
    \includegraphics[scale=0.8]{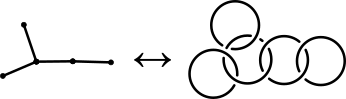}
    \caption{A link naturally associated to a tree}
    \label{fig:treelink}
\end{figure}
Let's call such links \emph{tree links}. For tree links, we know that\footnote{When $\deg v>1$, it should be understood as the power series expansion in $x$.}
\begin{equation}
    F_{L_\Gamma}(x_1,\cdots,x_l,q) \cong \prod_{v\in V}\qty(x_v^{1/2}-x_v^{-1/2})^{1-\deg v}.
\end{equation}
Starting from a tree link and doing some $-\frac{1}{r}$ partial surgeries, we get $F_L$ for various torus links.
For example, starting from a tree with vertices $v_0, \cdots, v_p$ where $v_0$ is connected to all the other vertices, we can do $-\frac{1}{r}$ surgery on the central vertex. The resulting link is the $T_{rp,p}$-torus link, and its has
\begin{equation}
    F_{T_{rp,p}}(x_1,\cdots,x_p,q) \cong \mathcal{L}\qty[\frac{x_0^{\frac{1}{2r}}-x_0^{-\frac{1}{2r}}}{(x_0^{\frac{1}{2}}-x_0^{-\frac{1}{2}})^{p-1}}],
\end{equation}
where 
\[\mathcal{L}: x_0^u \mapsto q^{ru^2}\prod_{1\leq i\leq p}x_i^{ru}.\]
In particular, 
\begin{equation}
    F_{T_{4,2}}^+(x,y,q) \cong x^{\frac{1}{2}}y^{\frac{1}{2}}\sum_{m\geq 0}(-1)^mq^{\frac{m(m+1)}{2}}x^my^m.
\end{equation}
Further $-\frac{1}{r}$-surgery on a component of $T_{4,2}$ gives $T_{2r+1,2}$, and in this way we recover the previously known result on $F_{T_{2r+1,2}}(x,q)$ \cite{GM}.

\subsubsection{Whitehead link and Borromean rings}
Some more interesting examples are the Whitehead link and the Borromean rings. We have already seen how the Borromean rings are closely related to double twist knots; $K_{m,n}$ is the $-\frac{1}{m}, -\frac{1}{n}$-surgery on two components of the Borromean rings. Whitehead link is also a close cousin, as it is the $-1$-surgery on a component of the Borromean rings (Figure \ref{fig:Whiteheadlinks}). 
\begin{figure}[h]
    \centering
    \includegraphics[scale=0.5]{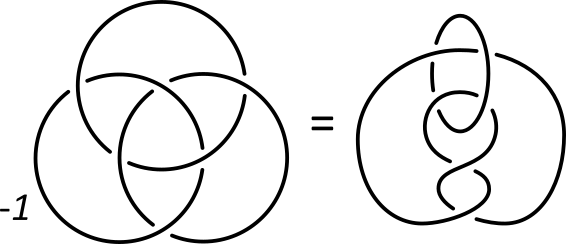}
    \caption{Whitehead link as $-1$ surgery on a component of the Borromean rings}
    \label{fig:Whiteheadlinks}
\end{figure}
Hence the $-\frac{1}{r}$-surgery on a component of the Whitehead link is the twist knot $K_{r,1}$. 
Knowing how to compute $F_{K_{m,n}}(x,q)$ for all $m,n>0$ big enough, we can deduce $F_K$ for the Whitehead link and the Borromean rings. This is because, in the partial surgery formula \eqref{eq:partialsurg}, the contribution of each coefficient of $F_L$ as a series in $x_0$ gets far apart from one another as $r$ gets large. In another words, if we know $F_K$ of the $-\frac{1}{r}$-surgery on a component of $L$ for all sufficiently large $r$, we can uniquely determine $F_L$. We refer to this process of deducing $F_K$ from its $-\frac{1}{r}$-surgeries as \emph{reverse engineering}. 

Reverse-engineering $F_K$ for the Whitehead link from $F_K$ for twist knots, we get
\[F_{\mathbf{Wh}}^+(x,y,q) \cong x^{\frac{1}{2}}y^{\frac{1}{2}}\sum_{j\geq 0}f_j^{\mathbf{Wh}}(x,q)y^{j} = x^{\frac{1}{2}}y^{\frac{1}{2}}\sum_{i,j\geq 0}f_{i,j}^{\mathbf{Wh}}(q)x^iy^j.\]
where $f_{i,j}^{\mathbf{Wh}}(q) = f_{j,i}^{\mathbf{Wh}}(q)$ because the Whitehead link is symmetric, and the first few coefficients are given by
\begin{align*}
    f_0^{\mathbf{Wh}}(x,q) &= 1 + x +x^2 +x^3 +x^4 +x^5 +\cdots = \frac{1}{1-x},\\
    f_1^{\mathbf{Wh}}(x,q) &= 1 +(-q^{-1}+1+q)x +(-q^{-2}-q^{-1}+1+q+q^2)x^2\\ &\quad+(-q^{-3}-q^{-2}-q^{-1}+1+q+q^2+q^3)x^3\\
    &\quad+(-q^{-4}-q^{-3}-q^{-2}-q^{-1}+1+q+q^2+q^3+q^4)x^4 +O(x^5)\\
    &= \sum_{i\geq 0}\frac{q^{i+1}+q^{-i-1}-2}{q-1}x^i,\\
    f_2^{\mathbf{Wh}}(x,q)  &= 1 +(-q^{-2}-q^{-1}+1+q+q^2)x +(-2q^{-2}-2q^{-1}+q+2q^2+q^3+q^4)x^2\\
    &\quad+(q^{-5}-q^{-3}-3q^{-2}-3q^{-1}-1+2q^2+2q^3+2q^4+q^5+q^6)x^3\\
    &\quad+(q^{-7}+q^{-6}+q^{-5}-q^{-4}-2q^{-3}-4q^{-2}-4q^{-1}-2-q+q^2+2q^3+3q^4+2q^5\\
    &\quad\quad+2q^6+q^7+q^8)x^4 +O(x^5),
\end{align*}
\begin{align*}
    f_3^{\mathbf{Wh}}(x,q) &= 1 +(-q^{-3}-q^{-2}-q^{-1}+1+q+q^2+q^3)x\\
    &\quad+(q^{-5}-q^{-3}-3q^{-2}-3q^{-1}-1+2q^2+2q^3+2q^4+q^5+q^6)x^2 +O(x^3),\\
    f_4^{\mathbf{Wh}}(x,q) &= 1+(-q^{-4}-q^{-3}-q^{-2}-q^{-1}+1+q+q^2+q^3+q^4)x\\
    &\quad+(q^{-7}+q^{-6}+q^{-5}-q^{-4}-2q^{-3}-4q^{-2}-4q^{-1}-2-q+q^2+2q^3+3q^4+2q^5\\
    &\quad\quad+2q^6+q^7+q^8)x^2 +O(x^3).
\end{align*}
In other words, 
\begin{align*}
    f_{i,0}^{\mathbf{Wh}}(q) &= 1 \quad\text{for all }i,\\
    f_{i,1}^{\mathbf{Wh}}(q) &= \frac{q^{i+1}+q^{-i}-2}{q-1} \quad\text{for all }i,\\
    f_{i,2}^{\mathbf{Wh}}(q) &= 1+\sum_{k=0}^{i}\frac{q^{2k+1}+(q^{-k}-q^k)(1+q^{-1})-q^{-2k+1}}{q-1} \quad\text{for all }i,\\
    f_{3,3}^{\mathbf{Wh}}(q) &= q^{-7}+q^{-6}+2q^{-5}+q^{-4}-2q^{-3}-4q^{-2}-6q^{-1}-4-3q+2q^3+3q^4+3q^5+3q^6\\
    &\quad\quad+2q^7+q^8+q^9,
\end{align*}
and so on. 
Every coefficient seems to be a Laurent polynomial in $q$. 
In the classical limit $q\rightarrow 1$, we get
\[F_{\mathbf{Wh}}(x,y,1) = \frac{1}{(x^{\frac{1}{2}}-x^{-\frac{1}{2}})(y^{\frac{1}{2}}-y^{-\frac{1}{2}})}\]
which is, as expected, the inverse of the Alexander-Conway function for the Whitehead link.
It is pleasant to observe that we can recover the result on $F_K$ for the figure-eight knot given in \cite{GM} by doing the $1$-surgery on a component of the Whitehead link. Similarly, after orientation reversal and doing $-\frac{1}{r}$-surgery on one of the components, we get $F_K$ for twist knots $K_{r,-1}$ with $r\geq 1$. 

By looking at $F_{K_{m,n}}$ for fixed $m$, we can also reverse-engineer for the $-\frac{1}{m}$ surgery on a component of the Borromean rings. For instance, when $m=2$, we have
\begin{align*}
    f^{\mathbf{Bor}_{-1/2}}_{i,0}(q) &= 1-q+q^3-q^6+q^{10}-q^{15}+q^{21}-q^{28}+O(q^{36}) \quad\text{for all }i,\\
    f^{\mathbf{Bor}_{-1/2}}_{1,1}(q) &= 1-2q+3q^3+q^4-q^5-3q^6-q^7+q^9+O(q^{10}),\\
    f^{\mathbf{Bor}_{-1/2}}_{2,1}(q) &= -2q+q^2+4q^3+4q^4-3q^5+O(q^6).
\end{align*}
Once we have $F_K$ for the $-\frac{1}{m}$-surgery on the Borromean rings for all $m$, we can use that to reverse-engineer $F_K$ for the Borromean rings. Let's write
\[F_{\mathbf{Bor}}^+(x,y,z,q) \cong x^{\frac{1}{2}}y^{\frac{1}{2}}z^{\frac{1}{2}}\sum_{i,j,k\geq 0}f_{i,j,k}^{\mathbf{Bor}}(q)x^iy^jz^k,\]
where $f_{i,j,k}^{\mathbf{Bor}}$ is symmetric under permutation of the indices. 
Reverse-engineering, one can deduce that 
\begin{align*}
    f_{i,j,0}^{\mathbf{Bor}}(q) &= -1\quad\text{for all }i,j\geq 0.
\end{align*}
This is consistent with the fact that in the classical limit, we should have
\[F_{\mathbf{Bor}}(x,y,z,1) = \frac{1}{(x^{\frac{1}{2}}-x^{-\frac{1}{2}})(y^{\frac{1}{2}}-y^{-\frac{1}{2}})(z^{\frac{1}{2}}-z^{-\frac{1}{2}})}.\]

\subsubsection{An approach to general knots}
Before ending this section, let us present an idea toward $F_K$ for general knots. As we mentioned in the introduction, the problem of mathematically defining the GPPV series $\hat{Z}$ can be viewed as a two-step problem, first defining $F_K$ for links, and then making a surgery formula that works for all surgery coefficients. In Section \ref{sec:main} we have made some progress toward the first step by giving a definition for positive braid knots. Positive braid knots may seem to be too special, but when combined with the partial surgery formula, they are actually quite general, in a sense we are about to describe. 

Let $K$ be any knot, presented as the closure of a braid $\beta$. Let's say $\beta$ has $n$ number of negative crossings, $\sigma_{i_1}^{-1},\cdots,\sigma_{i_n}^{-1}$. Replacing each negative crossing $\sigma_{i_k}^{-1}$ with an odd power of positive crossing $\sigma_{i_k}^{2r_k+1}$, we get a positive braid parametrized by non-negative integers $r_1,\cdots,r_n$. Let's call its braid closure $K(r_1,\cdots,r_n)$. We know how to compute $F_{K(r_1,\cdots,r_n)}(x,q)$ for any non-negative integers $r_1,\cdots,r_n$, and these are sufficient data to reverse-engineer $F_L(x,y_1,\cdots,y_n,q)$ where $L$ is the link obtained by replacing each negative crossings of $K$ by a positive crossing with an auxiliary unknot linked to it, as in Figure \ref{fig:Amatrix}.\footnote{We can study odd powers of the flipped $R$-matrix to reverse-engineer the matrix associated to the tangle in Figure \ref{fig:Amatrix}. It will be a linear map $V_\infty\otimes V_\infty \rightarrow V_\infty\otimes V_\infty$ over the field $\Bbbk_{x,y} := \mathbb{C}(q^{\frac{1}{2}},x^{\frac{1}{2}},y^{\frac{1}{2}})$.}
\begin{figure}[h]
    \centering
    \includegraphics[scale=0.5]{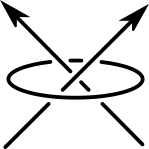}
    \caption{A crossing with an auxiliary unknot linked to it}
    \label{fig:Amatrix}
\end{figure}
This is because $K(r_1,\cdots,r_n)$ is the $-\frac{1}{r_1},\cdots,-\frac{1}{r_n}$-surgery on the auxiliary components of $L$. Having $F_L(x,y_1,\cdots,y_n,q)$ doesn't seem to be too far from having $F_K(x,q)$, as $K$ is $1,\cdots,1$-surgery on the auxiliary components of $L$. What is missing is a surgery formula that works for any surgery coefficients. In this sense, the problem of defining $F_K$ can be reduced to the problem of finding a general surgery formula.

\section{Strange identity for positive braid knots}\label{sec:strangeid}
Throughout this article, our main focus was to understand $F_K$ better. In this section, we turn our attention to a neat corollary of Theorem \ref{pbthm}. According to the theorem, for a positive braid $\beta$ and its closure $K$, 
\[\Tr_q' \bm{\beta}_{V_\infty^h} =
\frac{F_K^-(x,q)}{x^{\frac{1}{2}}-x^{-\frac{1}{2}}}.\]
As we have emphasized, If we set $q=e^\hbar$, their $\hbar$-expansion agrees with the large color expansion of the colored Jones polynomial. Thanks to Weyl symmetry, we see that
\[\Tr_q' \bm{\beta}_{V_\infty^h} \text{``}=\text{''} \frac{F_K^{\pm}(x,q)}{x^{\frac{1}{2}}-x^{-\frac{1}{2}}}\]
at the level of $\hbar$-expansion. 
Now let's specialize both sides by $x=1$ to get the ``identity'': 
\begin{equation}
    \Tr_q' \bm{\beta}_{V_\infty^h} \bigg\vert_{x=1} \text{``}=\text{''} \frac{F_K^{\pm}(x,q)}{x^{\frac{1}{2}}-x^{-\frac{1}{2}}}\bigg\vert_{x=1}.
\end{equation}
Note that the l.h.s.\ and its derivatives only makes sense when $q$ is a root of unity (and agrees with the Kashaev's invariant), and the r.h.s. is a well-defined $q$-series that makes sense inside the unit disk. The identity holds in a sense of perturbative series in $\hbar$, near $q=1$. 
Thus, as stated in Corollary \ref{strangeidentity}, we found a strange identity for every positive braid knot! 

For example, when $K$ is the right-handed trefoil, 
\[\langle \mathbf{3}_1 \rangle \text{``}=\text{''} \frac{F_{\mathbf{3}_1}^{\pm}(x,q)}{x^{\frac{1}{2}}-x^{-\frac{1}{2}}} = -\frac{q}{2}\sum_{m=1}^{\infty}m\qty(\frac{12}{m})q^{\frac{m^2-1}{24}} = \frac{1}2{\qty(-q+5q^2+7q^3-11q^6 -\cdots)}.\]
This is the famous Kontsevich-Zagier strange identity \cite{Z}. 
For another example, take $\mathbf{10}_{139}$, which is non-torus. Then we have
\[\langle \mathbf{10}_{139} \rangle \text{``}=\text{''} \frac{F_{\mathbf{10}_{139}}^{\pm}(x,q)}{x^{\frac{1}{2}}-x^{-\frac{1}{2}}} = \frac{1}{2}\qty(-7q^4 +26q^6 -15q^7 +17q^8 +\cdots).\]
It is a very interesting problem to see if this ``strange identity'' holds near other roots of unity.

\section{Open questions and future directions}
This paper opens up new directions of study and leave some interesting open questions. We conclude this paper by listing some of those. 

\textbf{Future directions: }
\begin{itemize}
    \item Extend this $R$-matrix approach to find a full mathematical definition of $F_K$.
    \item Understand the geometrical meaning of this $R$-matrix approach, especially the apparent connection with contact topology. 
    \item It is conjectured that $\hat{Z}$ enjoys quantum modularity. Study quantum modular properties of $F_K$. 
    \item Find a surgery formula that works for any surgery coefficients. 
    \item Ultimately, categorify $F_K$ and $\hat{Z}$. 
\end{itemize}

\textbf{Some more specific questions: }
\begin{itemize}
    \item When the coefficients of $F_K$ are infinite $q$-series (e.g.\ for non-fibered knots with non-monic Alexander polynomial), is there an easy way to find $F_K$ for the mirror knot?
    \item Find closed form expressions for $F_{\mathbf{Wh}}$ and $F_{\mathbf{Bor}}$. 
    \item We expect that Theorem \ref{pbthm} can be generalized to higher rank. It would be nice to work it out explicitly. 
\end{itemize}

\bibliography{FKexamples}
\bibliographystyle{alpha}

\end{document}